\newtheorem{thm}{Theorem} 
\newtheorem{lem}[thm]{Lemma}
\newtheorem{cor}[thm]{Corollary}
\theoremstyle{definition}
\newtheorem{defn}[thm]{Definition}
\newtheorem{quest}[thm]{Question}
\newtheorem{exmp}[thm]{Example}
\theoremstyle{remark}
\newtheorem{rem}[thm]{Remark}
\title[On ergodic properties of iceberg transformations]
	{On ergodic properties of ``iceberg'' transformations. \\ I:~Approximation and spectral multiplicity}
\author{A.\,A.\,Prikhod'ko}
\begin{document}

\begin{abstract}
We investigate a class of mixing dynamical systems 
around the concept of iceberg transformation. 
In brief, an {\it iceberg transformation\/} is defined using symbolic language as follows. 
We build a sequence of words such that the next word is 
a concatenation of rotated copies of the previous word. 
For example, a word CAT can turn into CAT.ATC.TCA.TCA.CAT.ATC, then we repeat the procedure 
applying it to this new word and so on.
Geometrically, given an invertible measure preserving transformation $T$ 
an {\it iceberg\/} is a union of two icelets for the map~$T$, one direct and one reverse 
with common base set, where {\it icelet\/} is defined in a similar way as Rokhlin tower 
${B \sqcup TB \sqcup \ldots \sqcup T^{h-1}B}$, namely, 
an {\it icelet\/} is a sequence of disjoint measurable sets 
${\{B_0, B_1, \ldots, B_{h-1}\}}$ such that 
${B_{j+1} \subseteq TB_j}$, in other words, the levels $B_j$ continuously disappear 
from the base to the top of the icelet. Reverse icelet is defined as icelet for the inverse map $T^{-1}$, 
and it ``grows'' towards the past. Iceberg transformation is approximated by a sequence of icebergs, 
resembling the behaviour of rank one ergodic maps. 
The definition of iceberg maps essentially involves the notion of 
{\it interval exchange transformation}. 

We study combinatorial and ergodic properties for several classes of measure preserving transformations 
satisfying iceberg approximation including random iceberg maps and explicitely definied iceberg maps, 
in particular, involving finite fields. 
We also consider a series of extensions for iceberg approximation property. 
It~is a~common phenomenon that iceberg approximation implies local rank property and, hence, 
finite multiplicity of spectrum. 
It is show that a class of iceberg transformations 
almost surely has simple spectrum, $1/4$-local rank property 
and spectral type $\sigma$ such that ${\sigma \conv \sigma \ll \la}$ 
where $\la$ is the Lebesgue measure on the circle~$S^1$. 
%
%
%
%

%
\end{abstract}

\newcounter{nfigure}[section]
\renewcommand{\thenfigure}{\thesection.\arabic{nfigure}}

\maketitle


\begin{flushright}
	\it In memory of V.\,I.\,Arnold 
\end{flushright}

\section{Iceberg maps at a glance} \label{sIcebergAtAGlance} 

Let us define {\it rotation operator\/} $\rho_\a$ on finite words: 
if ${W = W_{(1)}W_{(2)}}$ and the length of the first subword ${|W_{(1)}| = \a}$ 
then we set ${\rho_\a(W) = W_{(2)}W_{(1)}}$. 
Observe that in other terms $\rho_\a$ cuts the word $W$ after $\a$ positions 
and then substitutes $W_{(1)}$ and~$W_{(2)}$. This kind of transform is a discrete 
variation of the well-known {\it interval exchange map}. 
Starting from a word $W_0$ consider the following {\it rotated words concatenation\/} procedure. 
A~word $W_n$ is repeated $q_n$ times, next, each copy is rotated 
by given value of positions $\a_{n,y}$, and the next word in the sequence is given by the formula 
\begin{equation}\label{eDefWnPlusOne}
	W_{n+1} = \rho_{\a_{n,0}}(W_n)\rho_{\a_{n,1}}(W_n)\ldots \rho_{\a_{n,{q_n-1}}}(W_n). 
\end{equation}
For example, if $W_1$ is the word ``CAT'', ${q_1 = 6}$ and 
%
$
	(\rho_{\a_{1,0}}, \rho_{\a_{1,1}}, \ldots, \rho_{\a_{1,{q_n-1}}}) = (0,1,2,2,0,1) 
$, 
%
then 
\begin{equation}
	\CAT \mapsto \CAT.\ATC.\TCA.\TCA.\CAT.\ATC = W_2 
\end{equation}
(points ``.'' are used to distinguish groups of symbols). 
At the next step we rotate the word $W_2$. The following table shows positions of cutting ($\times$) 
\begin{align}
	{\mathrm{ CATATCT_\times CATCACATATC }} \\
	{\mathrm{ CATA_\times TCTCATCACATATC }} \notag \\
	{\mathrm{ CATATCTCATC_\times ACATATC }} \notag 
\end{align}
used to create the word 
\begin{equation}
	W_3 = {\scriptstyle{\mathrm{ 
		CATCACATATC \,|\, CATATCT \:\seppoint\: 
		TCTCATCACATATC \,|\, CATA \:\seppoint\: 
		ACATATC \,|\, CATATCTCATC
		}}} \ldots
\end{equation}
It is a common phenomenon that this sequence of words generates a dynamical system. 
Simply speaking the words in the sequence $W_n$ become more and more stationary according 
to empirical distributions on words of bounded length, and the dynamical system is 
associated with the shift map ${T \Maps (x_n) \mapsto (x_{n+1})}$. 
At~the same time there is a~simple way to produce a geometrical 
description of the dynamics for this symbolic system. Indeed, one way of drawing $\rho_a$ 
is to fix cut points, like it is shown in the following line 
\begin{equation}\label{eRotatedCATs} 
		{\mathrm{ 
		CAT_\times					, \quad 
		C_\times AT 				, \quad 
		CA_\times T 				, \quad 
		CA_\times T					, \quad 
		CAT_\times					, \quad 
		C_\times AT  
		}}
\end{equation} 
which implies mapping 
\begin{equation}\label{eCATdefWithCuts}  
	\CAT \quad\mapsto\quad {\mathrm{ 
		CAT					\,.\,
		ATC					\,.\,
		TCA					\,.\,
		TCA					\,.\, 
		CAT					\,.\, 
		ATC
		}},
\end{equation} 
and other way is to think that the word $\CAT$ is shifted as a function on the group $\Set{Z}_h$ 
(in this case shift actually coincides with the rotation oeprator~$\rho_\a$). 
\begin{figure}[th]
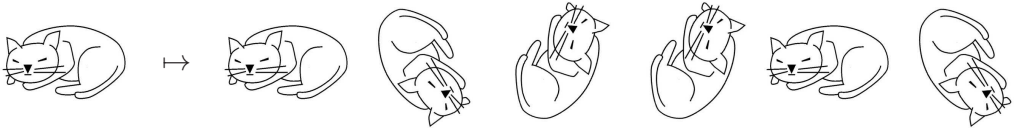

$$	\gCAT \quad \raise 8mm\hbox{$\mapsto$} \quad \gCAT \gATC \gTCA \gTCA \gCAT \gATC $$
  \caption{Each entrance of the word $\CAT$ in $W_2$ is rotated by the 
  map ${t \mapsto t+\a_{1,y} \pmod{3}}$.} 
  \label{fGrRotatingCATs}
\end{figure}

Suppose now that symbols $\CATalphabet$ correspond to a partition $\{P_\syC,P_\syA,P_\syT\}$ 
of the probability space $(X,\cA,\mu)$ (we assume that $X$ is a Lebesgue space without atoms 
which is isomorphic by Rokhlin's theorem to the unit segment $[0,1]$ with Lebesgue measure), 
${X = P_\syC \sqcup P_\syA \sqcup P_\syT}$ and ${\mu(P_\syC) = \mu(P_\syA) = \mu(P_\syT) = \frac13}$.

\begin{figure}[th]
  \centering
  \unitlength=1mm
  \begin{picture}(96,56)
      \footnotesize
      \put(30,21)
      {
      	\put(20,0)
      	{
        	\linethickness{1pt}
        	\put(0,0){\line(1,0){10}}
        	\put(0,0){\line(0,1){30}}
        	\put(0,30){\line(1,0){10}}
        	\put(10,0){\line(0,1){30}}
        	\put(3.3,33){$V_3$}
        	\put(13,3){$B_0$}
        	\put(13,13){$B_1$}
        	\put(13,23){$B_2$}
        	\put(3,3){\Large $\syC$}
        	\put(3,13){\Large $\syA$}
        	\put(3,23){\Large $\syT$}
        	\linethickness{0.2pt}
        	\put(0,10){\line(1,0){10}}
        	\put(0,20){\line(1,0){10}}
        }
      	\put(10,-10)
      	{
        	\linethickness{1pt}
        	\put(0,0){\line(1,0){10}}
        	\put(0,0){\line(0,1){30}}
        	\put(0,30){\line(1,0){10}}
        	\put(10,0){\line(0,1){30}}
        	\put(3.3,33){$V_2$}
        	\put(3,3){\Large $\syT$}
        	\put(3,13){\Large $\syC$}
        	\put(3,23){\Large $\syA$}
        	\linethickness{0.2pt}
        	\put(0,10){\line(1,0){10}}
        	\put(0,20){\line(1,0){10}}
        }
      	\put(0,-20)
      	{
        	\linethickness{1pt}
        	\put(0,0){\line(1,0){10}}
        	\put(0,0){\line(0,1){30}}
        	\put(0,30){\line(1,0){10}}
        	\put(10,0){\line(0,1){30}}
        	\put(3.3,33){$V_1$}
        	\put(-7.8,3){$B_{-2}$}
        	\put(-7.8,13){$B_{-1}$}
        	\put(-7.8,23){$B_0$}
        	\put(3,3){\Large $\syA$}
        	\put(3,13){\Large $\syT$}
        	\put(3,23){\Large $\syC$}
        	\linethickness{0.2pt}
        	\put(0,10){\line(1,0){10}}
        	\put(0,20){\line(1,0){10}}
        }
      \put(20,0)
      {
      }
      \put(10,-10)
      {
      }
      \put(0,-20)
      {
      }
      }
  \end{picture}
  \caption{Iceberg associated with the word $\CAT$. 
  	Letters should be read in vertical direction. 
		The~map~$T$ lifts most part of any elementary set (a square) on the picture to the upper level.} 
  \label{fCAT}
\end{figure}
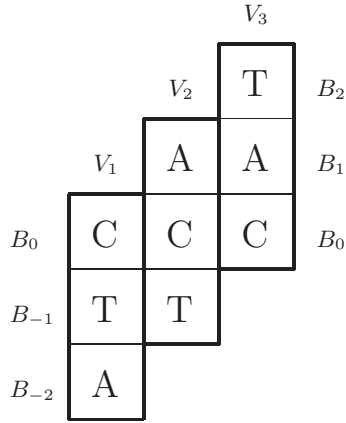

\begin{defn}
Let us draw at fig.\,\ref{fCAT} all the rotations of the word $\CAT$ in a way used to draw a Rokhlin tower, 
placing same letters to the same level and placing up the letter presumed to be next 
after the letter of the current level. This picture is called {\it iceberg}. 
%
%
The level corresponding to the first letter of the word (letter ``$\syC$'') 
is called the {\it base level\/} of the iceberg. 
\end{defn}

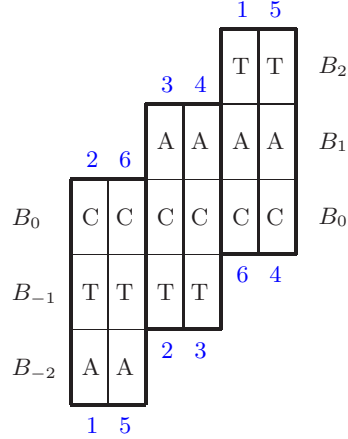
\begin{figure}[th]
  \centering
  \unitlength=1mm
  \begin{picture}(96,53)
      \footnotesize
      \put(30,21)
      {
      	\put(20,0)
      	{
        	\linethickness{1pt}
        	\put(0,0){\line(1,0){10}}
        	\put(0,0){\line(0,1){30}}
        	\put(0,30){\line(1,0){10}}
        	\put(10,0){\line(0,1){30}}
        	\linethickness{0.6pt}
        	\put(5,0){\line(0,1){30}}
        	\put(13,4.1){$B_0$}
        	\put(13,14.1){$B_1$}
        	\put(13,24.1){$B_2$}
        	\put(1.5,4){$\syC$}
        	\put(1.5,14){$\syA$}
        	\put(1.5,24){$\syT$}
        	\put(6,4){$\syC$}
        	\put(6,14){$\syA$}
        	\put(6,24){$\syT$}
        	\linethickness{0.2pt}
        	\put(0,10){\line(1,0){10}}
        	\put(0,20){\line(1,0){10}}
        }
      	\put(10,-10)
      	{
        	\linethickness{1pt}
        	\put(0,0){\line(1,0){10}}
        	\put(0,0){\line(0,1){30}}
        	\put(0,30){\line(1,0){10}}
        	\put(10,0){\line(0,1){30}}
        	\linethickness{0.6pt}
        	\put(5,0){\line(0,1){30}}
        	\put(1.5,4){$\syT$}
        	\put(1.5,14){$\syC$}
        	\put(1.5,24){$\syA$}
        	\put(6,4){$\syT$}
        	\put(6,14){$\syC$}
        	\put(6,24){$\syA$}
        	\linethickness{0.2pt}
        	\put(0,10){\line(1,0){10}}
        	\put(0,20){\line(1,0){10}}
        }
      	\put(0,-20)
      	{
        	\linethickness{1pt}
        	\put(0,0){\line(1,0){10}}
        	\put(0,0){\line(0,1){30}}
        	\put(0,30){\line(1,0){10}}
        	\put(10,0){\line(0,1){30}}
        	\linethickness{0.6pt}
        	\put(5,0){\line(0,1){30}}
        	\put(-7.8,3.9){$B_{-2}$}
        	\put(-7.8,13.9){$B_{-1}$}
        	\put(-7.8,23.9){$B_0$}
        	\put(1.5,4){$\syA$}
        	\put(1.5,14){$\syT$}
        	\put(1.5,24){$\syC$}
        	\put(6,4){$\syA$}
        	\put(6,14){$\syT$}
        	\put(6,24){$\syC$}
        	\linethickness{0.2pt}
        	\put(0,10){\line(1,0){10}}
        	\put(0,20){\line(1,0){10}}
        }
      	\put(20,0)
      	{
      		\color{blue}
        	\put(1,31.6){$1$}
        	\put(5.4,31.6){$5$}
        	\put(1,-3.7){$6$}
        	\put(5.4,-3.7){$4$}
        }
      	\put(10,-10)
      	{
      		\color{blue}
        	\put(1,31.6){$3$}
        	\put(5.4,31.6){$4$}
        	\put(1,-3.7){$2$}
        	\put(5.4,-3.7){$3$}
        }
      	\put(0,-20)
      	{
      		\color{blue}
        	\put(1,31.6){$2$}
        	\put(5.4,31.6){$6$}
        	\put(1,-3.7){$1$}
        	\put(5.4,-3.7){$5$}
        }
      }
  \end{picture}
  \caption{Poincar\'e map for the iceberg corresponding to sequence $\CAT.\ATC.\TCA.\TCA.\CAT.\ATC$} 
  \label{fdynCAT}
\end{figure}

Each column at fig.\,\ref{fCAT} corresponds to a rotation of the word $\CAT$: 
$V_1$~to~$\ATC$, $V_2$~to~$\TCA$ and $V_3$~to~$\CAT$. 

The~basic idea of iceberg is to guess that a measure preserving transformation $T$ 
maps each elementary set (shown as a square) to the upper set with small deviation. 
For example, if we split the column $V_3$ into letter-marked sets, 
${V = V_{3,\syC} \sqcup V_{3,\syA} \sqcup V_{3,\syT}}$, then we require: 
\begin{equation}
	\mu(TV_{3,\syC} \mid V_{3,\syA}) \approx 1 \quad \text{and} \quad \mu(TV_{3,\syA} \mid V_{3,\syT}) \approx 1. 
\end{equation}
Remark that if we denote levels of the iceberg as ${B_{-2},B_{-1},B_0,B_1,B_2}$ then 
${P_\syC = B_0}$, ${P_\syA = B_1 \cup B_{-2}}$, ${P_\syT = B_2 \cap B_{-1}}$, 


At this point it is completely unknown how $T$ acts on the top elementary set of each column. 
We specify the dynamics of~$T$ with the help of equation \eqref{eCATdefWithCuts} having the following 
translation to geometric language. 
We~divide each column $V_k$ into several vertical subcolumns 
to get one separate subcolumn for one entrance of rotated word ``$\CAT$'' in \eqref{eCATdefWithCuts} 
like it is shown on fig.\,\ref{fdynCAT}. 
Then we link the corresponding sets on the boundary marked by indexes $1,\ldots,6$ on fig.\,\ref{fdynCAT}. 
%
After linking we get the next (cyclic) sequence 
$\CAT.\ATC.\TCA.\TCA.\CAT.\ATC$ and we repeat the procedure. 
This construction is uniquely determined by rotations $(\a_{n,0},\ldots,\a_{n,q_n-1})$ for each step. 
Consider the {\it edge\/} of the iceberg, the union $E$ of all bottom sets of columns, 
and define the {\it Poincar\'e map\/} ${\tilde T_E \Maps E \to E}$, a measure preserving 
map corresponding to the way of linking subcolumns (see fig.\,\ref{fdynCAT}). 
Actually the map $\tilde T_E$ coincides with the return map $T_E$ induced by $T$ on~$E$ 
up to next step cuttings. 
Iterating this procedure we get a measure preserving transformation on a Lebesgue space. 


\medskip
Let us consider the space $L^2(X,\mu)$ of measurable functions 
${f \Maps X \to \Set{C}}$ with integrable squaare 
and define {\it Koopman operator} 
\begin{equation}
	\Hat T \Maps L^2(X,\mu) \to L^2(X,\mu) \Maps f(x) \to f(Tx). 
\end{equation}
The meaning of $\hat T$ is translation by $1$ step along the trajectory of~$T$. 
It can be easily seen that $\hat T$ is a unitary operator in space $L^2(X,\mu)$. 
By spectral teorem $\hat T$ is determined up to unitary equivalence by two invariants: 
spectral type $\sigma$ (a measure on $S^1$ up to equivalence) and multiplicity function 
$\Mult_T(z)$ mapping $S^1$ to the set ${\Set{N} \sqcup \{\infty\}}$. Since ${\hat T 1 = 1}$ 
usually $\hat T$ is restricted to the space of functions with zero mean ${\{ f \where \int f\,d\mu = 0\}}$. 
An~operator $\hat T$ has {\it simple spectrum\/} if there exist 
a function $e_0$ ({\it cyclic vector\/}) such that the iterations $T^k e_0$ 
generates the whole $L^2(X,\mu)$, where ${k \in \Set{Z}}$. 


\begin{thm}\label{thmRandomIcebergMap}
Let $T$ be an iceberg transformation given by 
uniform i.i.d.\ random rotations $\a_{n,k}$, and suppose that ${q_n \gg h_n}$ grows sufficiently fast. 
Then the following properties hold almost surely 
  \begin{itemize}
  	\item[(i)] $T$ has $1/4$-local rank (see definition in section 3), 
  	\item[(ii)] $\hat T$ has simple spectrum, 
  	\item[(iii)] $\sigma \conv \sigma \ll \la$, where $\sigma$ is the spectral type of $\hat T$  
  	 							and $\la$ is Lebesgue measure on~$S^1$, 
  	\item[(iv)] For a dense set of functions $f$ with zero mean $\forall \varepsilon > 0$ 
  								$$\langle T^t f,f \rangle = O(t^{-1/2+\varepsilon}).$$
  \end{itemize}
\end{thm}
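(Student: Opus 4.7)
I split the proof into a combinatorial part treating (i) and a spectral/probabilistic part treating (ii)--(iv), and I would handle them essentially independently.

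\textbf{Part (i): local rank $1/4$.}
At stage $n$ the iceberg consists of $h_n$ columns, one per rotation of $W_n$, each of which is a genuine Rokhlin tower of height $h_n$ for $T$. Equidistribution of the uniform i.i.d.\ rotations $\alpha_{n,k}$ together with $q_n \gg h_n$ ensures via a Borel--Cantelli argument that the measure of the iceberg tends to $1$ almost surely. Inside the direct icelet the level $B_j^{(n)}$ has measure of order $(h_n-j)/h_n^2$; therefore the set
$B_0' := T^{-\lfloor h_n/2\rfloor}B_{\lfloor h_n/2\rfloor}^{(n)} \subset B_0^{(n)}$
of points whose first $\lfloor h_n/2\rfloor$ $T$-iterates remain in the direct icelet has measure of order $1/(2h_n)$, and the collection $\{B_0',\,TB_0',\,\ldots,\,T^{\lfloor h_n/2\rfloor-1}B_0'\}$ is a genuine Rokhlin tower of total measure tending to $1/4$.

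\textbf{Part (ii): simple spectrum.}
I would take the mean-zero indicator $e_0^{(n)} := 1_{B_0^{(n)}} - \mu(B_0^{(n)})$ and establish, via a diagonal/limit procedure, that a limit vector $e_0$ is cyclic. Iceberg approximation gives that any measurable set is arbitrarily well approximated by unions of iceberg levels, i.e.\ by linear combinations of translates $\hat T^j 1_{B_0^{(n)}}$, so the cyclic subspace generated by $e_0$ is dense in $L^2_0(X,\mu)$. The delicate point is the direct/reverse duality of the iceberg, which could a priori force multiplicity~$2$; the uniform i.i.d.\ rotations~$\alpha_{n,k}$ should almost surely preclude any algebraic identity linking the two icelets and thus collapse multiplicity to~$1$.

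\textbf{Parts (iii) and (iv): Fourier decay.}
Here I plan to adapt the Bourgain--Klemes--Ryzhikov Fourier framework developed for random rank-one constructions. For a step function $f$ measurable with respect to the iceberg partition at stage~$n$, the correlation $\langle \hat T^t f, f\rangle$ at scales $|t| \leq h_n q_n$ admits an expansion as a trigonometric sum in the differences $\alpha_{n,k}-\alpha_{n,l}$. Hoeffding--Bernstein concentration then produces a bound of order $q_n^{-1/2+\varepsilon'}$ with probability $1-O(q_n^{-A})$ for any~$A$; a union bound in $t$ and Borel--Cantelli over $n$ give almost surely $|\langle \hat T^t f, f\rangle| \leq C_\varepsilon(f)\,t^{-1/2+\varepsilon}$ for every $\varepsilon > 0$, which is (iv). Square-summability of this bound makes $|\hat\sigma_f(t)|^2$ summable, hence $\sigma_f \conv \sigma_f \ll \la$, and by cyclicity of $e_0$ this is (iii).

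\textbf{Principal obstacle.}
The hardest step is the Fourier estimate in (iii)--(iv). Unlike the plain rank-one setting, an iceberg has two icelets sharing a common base, so the trigonometric polynomials expressing $\langle \hat T^t f,f\rangle$ contain cross-terms between the direct and reverse icelets, and the rotations $\alpha_{n,k}$ act on positions within $W_n$ in a cyclic (non-additive) fashion. Establishing Bernstein-type concentration for these cross-terms requires careful combinatorial accounting of the overlap pattern of displaced copies of $W_n$ inside $W_{n+1}$; this, together with quantitative matching of exponents in the union bounds, is what ultimately dictates the ``sufficiently fast'' growth requirement placed on~$q_n$.
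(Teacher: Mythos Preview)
Your proposal has the right outline for (i), (iii), (iv), but Part~(ii) contains a genuine gap, and the paper's approach to it is quite different.

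\textbf{Part (ii).} Your sentence ``any measurable set is arbitrarily well approximated by unions of iceberg levels, i.e.\ by linear combinations of translates $\hat T^j 1_{B_0^{(n)}}$'' conflates two distinct things. The first clause is true; the ``i.e.'' is false. In a Rokhlin tower the level $B_j$ equals $T^jB_0$, so the cyclic space of $1_{B_0}$ contains every level indicator. In an iceberg this fails: $T^jB_0$ is only partially inside $B_j$, because points reaching the top of a fat column jump to the bottom of some other column and continue there. So $T^jB_0 = B_j \cup (\text{error } \xi_j)$, and the error is not small --- its total mass over $|j|\le h_n/2$ is about $1/4$ of the space. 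Your direct cyclicity argument therefore stalls exactly where the problem becomes interesting; the hand-wave about ``no algebraic identity linking the two icelets'' does not repair it.

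The paper instead argues by contradiction using a spectral-multiplicity inequality (Lemma~\ref{lemSpMultEst}): if $\spMult(T)\ge 2$, two orthonormal $f_1,f_2$ must be poorly approximated in every cyclic subspace. One takes the cyclic space of $b_n=1_{B_{n,0}}$ and approximates $f$ by $g=\sum_{|j|\le h_n/2}f_{(n)}(j)T^jb_n$, decomposing $g=f-u+v$ where $u=f|_{E_n}$ lives on the outer triangles (measure $\approx 1/4$) and $v$ collects the jump contributions $\xi_j$. The randomness enters only to show $\langle u,v\rangle\to 0$ and $\langle f,v\rangle\to 0$ (Lemma~\ref{lemSimpleSpRandom}): uniform i.i.d.\ jumps make $v$ asymptotically uncorrelated with both $f$ and $u$. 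One then gets $\|f-g\|^2\to 1/2$ and $\|g\|\to 1$, which contradicts the multiplicity inequality by a numerical margin ($\sqrt2<3/2$).

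\textbf{Parts (i), (iii), (iv).} Your (i) is essentially the paper's argument specialised to uniform column measures; note, though, that the paper proves $1/4$-local rank \emph{deterministically} for any iceberg map with $q_n\to\infty$ (Theorem~\ref{thmIcebergLocalRank}, Lemma~\ref{lemIcebergStructure}) --- no Borel--Cantelli is needed. For (iii)--(iv) the paper is more elementary than your plan: it works with the cyclic correlation $\CyR_n$ on $\Set{Z}_{h_n}$, obtains the recursion $\CyR_{n+1}(sh_n)=q_n^{-1}\sum_y\CyR_n(\alpha_{n,y}-\alpha_{n,y-s})$, and uses a straight second-moment (variance) bound, iterated over~$n$, to reach $O(t^{-1/2+\varepsilon})$. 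No Bernstein concentration is invoked, and the ``cross-terms between direct and reverse icelets'' you flag as the principal obstacle do not arise, because the cyclic formulation on $\Set{Z}_{h_n}$ absorbs the two icelets into a single circular object.
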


In the first part of the paper we discuss combinatorial properties of iceberg transformations 
and prove statements (i) and (ii) of theorem~\ref{thmRandomIcebergMap}. 
We place in the second part the detailed investigation fo correlation decay, statements (iii) and~(iv). 
And the third part is devoted to a class of iceberg transformation involving finite field arithmetics.

\section{Motivation}

\subsection{Approximation and spectral invariants}\label{sMotivDynSysApprox}

Throughout this paper we consider invertible measure preserving tramsformations of 
a Lebesgue space $(X,\cA,\mu)$ as well as measure preserving group actions. 
A~measurable map ${T \Maps X \to X}$ is called {\it measure preserving\/} if 
${\mu(T^{-1}A) = \mu(A)}$ for any set ${A \in \cA}$. 
If $T$ is invertible and both $T$ and $T^{-1}$ are measure preserving then 
$T$ is usually called an {\it automorphism\/} on space $(X,\cA,\mu)$. 

A serious part of research in spectral theory of dynamical systems 
is based on the idea of approximation. 
The method and the first examples \cite{LemRankMorse,Oseledec1,Oseledec2,StepinDiss} 
originate from paper \cite{KSt} by A.\,Katok and A.\,St\"epin 
where the concept of {\it periodic approximation\/} was introdued. 
The method of approximation is also actively used in smooth dynamics 
(D.\,Anoson and A.\,Katok~\cite{AnosovKatok}, E.\,Sataev \cite{Sataev}, 
 A.\,Kochergin~\cite{KocherginAMix1}). 
Further development of this method has led to the notion of {\it rank one} approximation
(T.\,Adams and N.\,Friedman~\cite{Adams2}, R.\,Chacon~\cite{Chacon}, D.\,Ornstein~\cite{O})   
%
%
and a series of different concepts extending~it (see review by S.\,Ferenczi~\cite{Ferenczi1}).


The Koopman operator $\hat T$ associated with an automorphism $T$ 
is uniquely determinetd by the spectral type $\sigma$ and the multiplicity function $\Mult_T(z)$ 
(see~\cite{LemEncycloSpTh}). 
Denote $\spMult(T)$ the essential maximal value of the multiplicity function $\Mult_T(z)$. 
We say that $T$ has simple spectrum if it is of spectral multiplicity one, ${\spMult(T) = 1}$. 
Let us mention the following open question due to S.\,Banach.  

\begin{quest}
Does there exist an automorphism $T$ having simple spectrum 
and Lebesgue spectral type~$\sigma$\,? In other words, is it possible to find an automorphism $T$ 
such that for some ${e_0 \in L^2(X,\mu)}$ the sequence 
\begin{equation}
	\ldots,\ \hat T^{-1}e_0,\ e_0,\ Te_0,\ T^2e_0,\ \ldots 
\end{equation}
satisfies ${T^ie_0 \perp T^je_0}$ for ${i \not= j}$, 
and linear combinations of $T^je_0$ are dense in $L^2(X,\mu)$. 
\end{quest}

	  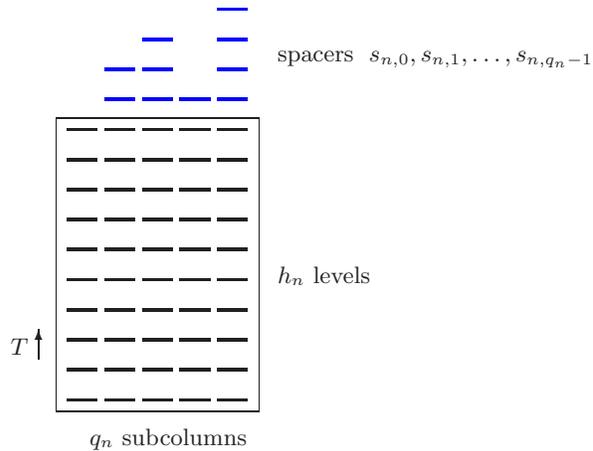
\begin{figure}[th]
	    \label{fStacking}
	    \centering
	    \unitlength=1mm
	    \begin{picture}(70,60)
	        \footnotesize
	        \put(20,6)
	        {
            \linethickness{1pt}
            \multiput(0,0)(5,0){5}{\line(1,0){4}} 
            \multiput(0,4)(0,4){9}{\line(1,0){4}}
            \multiput(5,4)(0,4){9}{\line(1,0){4}}
            \multiput(10,4)(0,4){9}{\line(1,0){4}}
            \multiput(15,4)(0,4){9}{\line(1,0){4}}
            \multiput(20,4)(0,4){9}{\line(1,0){4}}
            {\color{blue}
            \multiput(5,40)(0,4){2}{\line(1,0){4}}
            \multiput(10,40)(0,4){3}{\line(1,0){4}}
            \multiput(15,40)(0,4){1}{\line(1,0){4}}
            \multiput(20,40)(0,4){4}{\line(1,0){4}}
            }
            \linethickness{0.1pt}
            \put(-1.5,-1.5){\line(1,0){27}}
            \put(-1.5,-1.5){\line(0,1){39}}
            \put(-1.5,37.5){\line(1,0){27}}
            \put(25.5,-1.5){\line(0,1){39}}
            \put(-7.5,6){$T$}
            \linethickness{0.5pt}
            \put(-3.8,5.4){\vector(0,1){4}}
            %
            %
            \put(28,15.6){\footnotesize $h_n$ levels}
            \put(28,45){\footnotesize spacers $\; s_{n,0},s_{n,1},\ldots,s_{n,q_n-1}$}
            \put(3,-6){\footnotesize $q_{n}$ subcolumns}
	        }
	    \end{picture} 
	    \caption{Cutting-and-stacking construction of a rank one map} 
		\end{figure}

A {\it Rokhlin tower\/} of {\it height\/}~$h$ is a sequence of disjoint sets 
${\tow = \{C,TC,T^2C,\ldots,T^{h-1}C\}}$, where 
$T^jC$ are called {\it levels\/} of the tower, 
and $C$ is called the {\it base\/} set of~$\tow$. 
Denote ${\utw = C \sqcup TC \sqcup \ldots \sqcup T^{h-1}C}$.

\begin{defn}\label{defRankOne} 
A map $T$ is called {\it rank one\/} transformation 
if there exists a sequence of Rokhlin towers $\tow_n$ such that 
${\mu(\utw_n) \to 1}$ and the corresponding tower partitions $\twp_n$ 
approximate the $\sigma$-algebra $\cA$, in other words, 
for any measurable set $A$ there exist $\twp_n$-measurable%
\footnote{Given a tower $\tow = \{C,TC,\ldots,T^{h-1}C\}$ we use the same symbol $\twp$ 
 for the partition of the phase space into the levels $T^jC$ and the remainer set $X \sms \utw$}
sets $A_n$ with%
\footnote{$A \syms B$ denotes the symmetric difference $(A \sms B) \cup (B \sms A)$} 
${\mu(A_n \syms A) \to 0}$ as~${n \to \infty}$. 
\end{defn} 

\begin{thm}
Any rank one transformations $T$ 
has simple spectrum, ${\spMult(T) = 1}$. 
\end{thm}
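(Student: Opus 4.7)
Plan: I establish simple spectrum by exhibiting an explicit cyclic vector for $\hat{T}$ in $L^2(X,\mu)$. The natural building blocks are the base indicators $\chi_{C_n}$ of the approximating towers $\tow_n$: since $\hat{T}^j \chi_{C_n} = \chi_{T^j C_n}$ for $0 \le j < h_n$, the cyclic subspace $Z(\chi_{C_n})$ generated by $\chi_{C_n}$ already contains the indicator of every level of~$\tow_n$, and hence every $\twp_n$-measurable function supported on~$\utw_n$. Rank one then yields, for every $A \in \cA$ and every $\varepsilon > 0$, a $\twp_n$-measurable approximant $A_n = \bigsqcup_{j \in J_n} T^j C_n$ with $\mu(A \syms A_n) < \varepsilon^2$, so that $\chi_A$ lies within $L^2$-distance $\varepsilon$ of $\sum_{j \in J_n} \hat{T}^j \chi_{C_n}$. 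Consequently $\bigcup_n Z(\chi_{C_n})$ is dense in $L^2(X,\mu)$.

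The next step is to consolidate this dense union into the cyclic subspace of a single vector. After passing to a subsequence, I may assume the towers are refining in the cutting-and-stacking sense, so that each level $T^j C_n$ decomposes as a disjoint union of levels $T^{j'} C_{n+1}$. Then $\chi_{C_n}$ becomes a finite linear combination of iterates $\hat{T}^{j'} \chi_{C_{n+1}}$, which gives the chain $Z(\chi_{C_1}) \subseteq Z(\chi_{C_2}) \subseteq \cdots$; on the spectral side, the spectral measures $\sigma_n$ of $\chi_{C_n}$ form a monotone sequence $\sigma_n \ll \sigma_{n+1}$, and by the density established above $\sup_n \sigma_n$ is equivalent to the maximal spectral type of~$\hat{T}$. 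I then set $e_0 = \sum_n c_n \chi_{C_n}$ for rapidly decreasing $c_n > 0$ and claim that the spectral measure $\sigma_{e_0}$ is equivalent to $\sup_n \sigma_n$, hence to the maximal spectral type; this forces $e_0$ to be cyclic and yields $\spMult(T) = 1$.

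The main technical obstacle is the last spectral comparison: I must verify that the weighted sum $e_0$ does not suffer destructive interference that would strictly diminish its spectral measure below $\sup_n \sigma_n$. The cleanest way is to realize each inclusion $Z(\chi_{C_n}) \hookrightarrow Z(\chi_{C_{n+1}})$ as multiplication by a concrete function $f_n$ on the spectral model $L^2(\sigma_{n+1})$, rewrite the series defining $e_0$ on this spectral side as a telescoping expression, and choose the coefficients $c_n$ (e.g.\ decreasing so fast that the partial sums dominate the tails pointwise on~$S^1$) so that the resulting limit function is almost-everywhere nonzero with respect to $\sup_n \sigma_n$. Once this is secured, the equivalence $\sigma_{e_0} \sim \sup_n \sigma_n$ follows and completes the proof.
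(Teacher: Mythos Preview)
Your proposal is correct but follows a genuinely different route from the paper. The paper does not prove this theorem where it is first stated; it is derived later (Section~\ref{sSimpleSp}) as the special case $\beta = 1$ of the corollary ``$\spMult(T) \le 1/\beta$ for any $\beta$-local rank transformation'', which in turn rests on Lemma~\ref{lemSpMultEst}. That lemma argues by contradiction: if $\Mult(z) \ge m$ on a set of positive $\sigma$-measure, then one can find $m$ orthonormal vectors that no single cyclic subspace approximates better than the bound~\eqref{eSpMultEst}; but for a rank one map the cyclic subspace generated by the tower base indicator approximates every $L^2$ function arbitrarily well, so $m \le 1$.

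Your argument is the classical constructive one: exhibit an explicit cyclic vector $e_0 = \sum_n c_n \chi_{C_n}$ and verify $\sigma_{e_0} \sim \sup_n \sigma_{\chi_{C_n}}$ via the nesting $Z(\chi_{C_n}) \subseteq Z(\chi_{C_{n+1}})$. This yields more information (an actual cyclic vector) but is tailored to the rank one case. The paper's indirect inequality approach buys generality: the very same lemma bounds $\spMult$ for any map with positive local rank, and that is precisely the tool the paper needs later for iceberg transformations ($\beta = 1/4$, hence $\spMult \le 4$). One small caveat in your write-up: passing from the approximation definition of rank one (Definition~\ref{defRankOne}) to a \emph{refining} cutting-and-stacking sequence of towers is itself a nontrivial (though well-known) fact, so you should cite or sketch that step rather than assert it.
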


In paper \cite{KwiatkowskiLemMultII} I.\,Kwiatkowski and M.\,Lema\'nczyk proved that for any subset $K$ of~$\Set{N}$ 
containing $1$ there exists a transformation $T$ having $K$ as a set of essential values 
of the multiplicity function $\Mult_T(z)$.

%
\subsection{Cutting-and-stacking construction}\label{sMotivRankOneGeom}
%
%
To build a rank one transformation 
we start from any tower $\tow_0$ and build a sequence of towers $\tow_n$. 
A the $n$-th step the next tower $\tow_{n+1}$ is constructed from the previous one as follows. 
We cut the tower $\tow_n$ in vertical direction into $q_n$ equal {\it columns\/}. 
Then we add $s_{n,y}$ extra levels ({\it spacers\/}) to the top of each column 
and stack these extended columns $C_{n,y}$ together ${\tow_{n+1} = C_{n,0} C_{n,1} \ldots C_{n,q_n-1}}$ 
to get next tower~$\tow_{n+1}$. 
In terms of dynamics this means that whenever a~point comes 
to the top of a column $C_{n,y}$, next time it goes to the bottom level 
of the right-hand side column~$C_{n,y+1}$. 
This procedure repeated infinitely many times leads to an ergodic transformation 
if requirement~\eqref{eFiniteMeasRankOne} is satisfied.

\subsection{Rank one systems: Mixing and spectral type}

D.\,Ornstein in \cite{O} introduced a class of rank one transformations with random spacers 
defined as follows: we set ${s_{n,y} = \a_{n,y+1}-\a_{n,y}}$ where ${\a_{n,y} \ll h_n}$ 
are i.i.d.\ random variables. J.\,Bourgain has shown that Ornstein transformations 
are of singular spectral type almost surely~\cite{Bourgain}. 
A large class of generalized Ornstein transformations was studied by El~H.~El~Abdalaoui and F.\,Parreau 
\cite{elAbdal1,elAbdal3,AbPaPr}. 
I.\,Klemes has proved singularity of spectral type for a class of staircase constructions \cite{Klemes} 
defined by the spacer sequence ${s_{n,y} = y}$. 
Rank one transformations of Ornstein type as well as staircase constructions are 
also examples of mixing transformations. We say that $T$ is mixing if 
${\mu(T^j A \cap B) \to \mu(A)\,\mu(B)}$ for all measurable sets $A$ and~$B$. 
T.\,Adams proving Smorodinsky's conjecture \cite{Adams1} has shown that staircase rank one maps 
with ${q_n = n}$ are mixing. This result was extended by D.\,Creutz and C.E.\,Silva \cite{CreutzSilva}
and A.\,Danilenko and V.\,Ryzhikov (e.g.\ see~\cite{DanilenkoRyzh}).

Let us remark that $\hat T$ has simple spectrum iff there exists 
${f \in L^2(X,\mu)}$ ({\it cyclic vector}) such that 
\begin{equation}
	L^2(X,\mu) = \overline{{\mathrm{Span}}}(\{\Hat T^kf \where k \in \Set{Z}\}). 
\end{equation}
In this case $\sigma_f \sim \sigma$, where $f$ is a cyclic vector and $\sigma_f$ is defined by the property 
\begin{equation}
	\int_{S^1} z^k \,d\sigma_f = \scpr<T^kf,f>. 
\end{equation}

The following question concerning the spectral type of rank one transformations is still open. 

\begin{quest}
Is the following true: The spectral type of any rank one transformation is 
singular with respect to the Lebesgue measure $\la$ on~$S^1$? 
\end{quest}



\subsection{Generalized Riesz products}

If a function $f \in L^2(X,\mu)$ is constant on the levels of $n$-th tower $\tow_n$ for a rank one transformation 
then we identify $f$ with a function ${f_{(n)} \Maps \Set{Z} \to \Set{C}}$, ${f_{(n)}(j) \equiv f|_{T^j B_{n,0}}}$, 
where $T^j B_{n,0}$ is the level with index~$j$ of $n$-th tower. 
Let us define polynomials 
\begin{equation}
	P_n(z) = \frac1{\sqrt{q_n}} \sum_{y=0}^{q_n-1} z^{\omega_n(y)} \in \cM_{q_n}, 
\end{equation}
where 
\begin{equation}
	\omega_n(y) = yh_n + \sum_{j < y} s_{n,j}. 
\end{equation}
The spectral measure $\sigma_f$ can be represented as an infinite product (up to a constant multiplier) 
which converges in weak topology \cite{Bourgain,ChoksiNadkarni,KlemesReinhold,AbPaPr}: 
\begin{equation}
	\sigma_f = |\Hat f_{(n_0)}|^2 \prod_{n=n_0}^{\infty} |P_n(z)|^2, 
\end{equation}

\subsection{Littlewood polynomials and flatness phenomenon}\label{sMotivLittlePolyFlatness}

Let us consider the following classes of polynomials introduced by J.\,Littlewood \cite{Littlewood} 
(see also \cite{ErdelyiLittlewoodType02}) 
\begin{gather}
	\cK_n = \Bigl\{ P(z) = {\scriptstyle \frac1{\sqrt{n+1}}} 
				\sum_{k=0}^{n} a_k z^k \where |a_k| \equiv 1 \Bigr\}, 
	\\
	\cL_n = \Bigl\{ P(z) = {\scriptstyle \frac1{\sqrt{n+1}}} 
				\sum_{k=0}^{n} a_k z^k \where a_k \in \{-1,\;1\} \Bigr\}, 	
	\\
	\cM_n = \left\{ P(z) = {\scriptstyle \frac1{\sqrt n}}
				(z^{\omega_1} + z^{\omega_2} +\ldots+ z^{\omega_n}) 
				\where \omega_j \in \Set{Z},\ \omega_j < \omega_{j+1} \right\}.
\end{gather}
%
Polynomilas in the classs $\cK_n$ are called polynomials with {\it unimodular coefficients\/}. 

\begin{quest}[J.\,Littlewood, 1966]\label{qLittlewood}
Is the following true? For any $\eps > 0$ there exists a polynomial 
${P(z) \in \cK_n}$ such that 
\begin{equation}
	\forall z \in S^1 \qquad \bigl| |P(z)| - 1 \bigr| < \eps. 
\end{equation}
\end{quest}

\begin{thm}[Kahane, 1980]
The answer to question~\ref{qLittlewood} is ``yes'' with the speed of~convergence 
\begin{equation}
	\eps_n = O(n^{-1/17} \sqrt{\ln n}). 
\end{equation}
\end{thm}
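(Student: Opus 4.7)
The plan is to construct $P_n$ by randomly perturbing the phases of a carefully chosen deterministic unimodular skeleton, and to verify ultraflatness via concentration of measure combined with a Bernstein-type net argument on $S^1$.

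First, one takes as the skeleton a quadratic chirp
\[
Q_n(z) = \frac{1}{\sqrt{n+1}} \sum_{k=0}^{n} e^{i\pi k^2/(n+1)} z^k.
\]
Completing the square in the phase and applying discrete stationary phase (van der Corput on the resulting incomplete Gauss sum) shows that $|Q_n(e^{2\pi it})|$ is close to $1$ on a subset of $S^1$ of nearly full measure, with $L^\infty$ defect of order $n^{-1/2}$ away from the critical arguments where the stationary point meets the boundary of summation. Near those critical arguments $|Q_n|$ departs from $1$ by order $1$, so $Q_n$ alone cannot be ultraflat.

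Second, one smears the peaks by perturbing the phases independently,
\[
P_n(z) = \frac{1}{\sqrt{n+1}} \sum_{k=0}^{n} e^{i(\pi k^2/(n+1) + \xi_k)}\, z^k,
\]
with $\xi_k$ i.i.d.\ uniform on $[-\delta,\delta]$ and $\delta = n^{-\alpha}$ to be chosen. Expanding $|P_n(z)|^2$ into pair contributions yields a sum over $0 \le j<k \le n$ of bounded independent random variables (in the phase differences $\xi_k-\xi_j$), and Hoeffding's inequality applied after centering gives the sub-Gaussian tail
\[
\Pr\bigl(\bigl||P_n(z)|^2 - \mathbb{E}|P_n(z)|^2\bigr|>\eta\bigr) \le 2\exp(-c\,n\eta^2/\delta^2)
\]
for each fixed $z\in S^1$.

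Third, one promotes the pointwise estimate to a uniform one. Choose a net $\Lambda\subset S^1$ of cardinality $N\asymp n^{\kappa}$; the union bound ensures that $\bigl||P_n(z)|-1\bigr|<\eta$ holds simultaneously on $\Lambda$ with positive probability provided $\eta \ge c\,\delta\sqrt{(\log N)/n}$, and Bernstein's inequality $\|P_n'\|_\infty\le n\|P_n\|_\infty$ then extends the estimate from $\Lambda$ to every $z\in S^1$ at the extra cost $O(n/N)$. In parallel, one shows that $\mathbb{E}|P_n(z)|^2\approx 1$ uniformly: away from the critical arguments this is just the stationary-phase analysis of $Q_n$, while near them the random phases destroy the coherent summation responsible for the peaks of $|Q_n|$, which is precisely why $\delta$ must not be taken arbitrarily small.

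The main obstacle, and the source of the unusual exponent $1/17$, is the simultaneous balancing of four competing quantities: the chirp defect $n^{-1/2}$ on the bulk of $S^1$; the residual peak contribution near the critical arguments, which forces $\delta$ from below; the Hoeffding fluctuation $\delta\sqrt{(\log N)/n}$, which favours small $\delta$ and small $N$; and the Bernstein discretisation cost $n/N$, which favours large $N$. Writing $\delta=n^{-\alpha}$, $N=n^{\kappa}$ and equating the dominant terms reduces the optimisation to a small linear programme whose optimum is $\eps_n \asymp n^{-1/17}$, multiplied by the $\sqrt{\log n}$ factor inherited from the net. Any improvement on this exponent requires replacing the random chirp by a more arithmetic construction (Rudin--Shapiro-, Fekete- or Beck-type), which lies outside the purely probabilistic scheme sketched here.
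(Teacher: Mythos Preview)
The paper does not prove this theorem; it is quoted as Kahane's 1980 result and used only as background for the discussion of flatness in \S\ref{sMotivLittlePolyFlatness}. So there is no ``paper's own proof'' to compare against, and your proposal must stand or fall on its own.

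On its own, the sketch has the right architecture --- Kahane's argument is indeed a probabilistic perturbation of a Gauss-sum skeleton followed by a net argument --- but two steps as written do not go through. First, the Hoeffding step is not justified: expanding $|P_n(z)|^2$ gives a sum over pairs $(j,k)$ of terms depending on $\xi_k-\xi_j$, and these are \emph{not} independent across pairs (the pairs $(j,k)$ and $(k,l)$ share $\xi_k$), so Hoeffding's inequality for independent summands does not apply directly. One needs either a martingale/bounded-differences argument on the underlying variables $\xi_0,\ldots,\xi_n$, or a careful decoupling, and the resulting tail bound will not have the clean form $\exp(-cn\eta^2/\delta^2)$ you wrote. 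Second, the derivation of the exponent $1/17$ is asserted rather than carried out: you list four competing error terms and say the optimum of the resulting ``linear programme'' is $n^{-1/17}$, but you neither write down the constraints nor solve them, and with the terms as you have stated them (bulk defect $n^{-1/2}$, fluctuation $\delta\sqrt{(\log N)/n}$, discretisation $n/N$, and an unquantified ``residual peak'') there is no way for the reader to recover $1/17$. In Kahane's actual argument the exponent arises from a specific chain of parameter choices tied to the size of the exceptional arcs and the block structure of the construction, and getting precisely $1/17$ requires tracking those constants, not just naming the trade-offs.
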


A new progress in explicit constructions of ultra-flat unimodular polynomials on~$S^1$ 
is achieved by J.\,Bourgain and E.\,Bombieri in~\cite{BourgainBom}. 
Passing to group~$\Set{R}$ let us define class 
\begin{equation}
	\cM^{\Set{R}}_n = \left\{ P(z) = {\scriptstyle \frac1{\sqrt n}}
				(z^{\omega_1} + z^{\omega_2} +\ldots+ z^{\omega_n}) 
				\where \omega_y \in \Set{R},\ \omega_y < \omega_{y+1} \right\}.
\end{equation}
It is shown in~\cite{LebesgueFlows} that 
the answer to the question on $L^1$-flatness is positive in class $\cM^{\Set{R}}$ 
if we understand it as {\it flatness on compact sets\/} in~$(0,\infty)$. 
It occurs that flat sums of such kind given by exponential frequency functions 
\begin{equation}
	\omega_y = \frac{n}{\eps^2} e^{\eps y/n} 
\end{equation} 
satisfy natural conditions needed to be a polynomial in the Riesz product for a rank one flow, 
which proves the existence of rank one flows with simple Lebesgue spectrum.

\begin{quest}[\SIGN{open}]
Can we see flatness in $\cL_n$ or $\cM_n$? 
\end{quest}

%
T.\,Downarowicz and Y.\,Lacroix \cite{Downar} has proved that 
if all continuous binary Morse systems have singular spectra then 
the merit factors of binary words are bounded (the Turyn's conjecture holds). 
In the work~\cite{Guenais} M.\,Guenais has shown that 
the positive answer to the Littlewood question in~$\cL_n$ 
is equivalent to the fact that a class of transformations 
given by Morse cocycles has a Lebesgue component in spectrum.

\subsection{Idea}\label{sMotivIdea}

In this paper we investigate a new class of dynamical systems%
\footnote{To be correct we should say {\it a new approximation property\/} instead of 
	{\it new class of dynamical systems\/} because, in fact, it is unknown how to distinguish it, 
	for example, to prove that there exists a transformation in this new class which is not~rank~one 
	(cf.\ questions~\ref{qIcebergAndRank}). 
}, 
a hybrid concept extending both Katok--St\"epin periodic approximation and rank one property. 
Actually the idea leading to the new approximation property is purely {\it analytic\/}. 
It is known that the spectral type of a rank one transformation 
(a~map approximated by a sequence of Rokhlin towers) is given by generalized Riesz product 
\begin{equation}
	\sigma \sim |\hat f_{(n_0)}| \cdot \prod_{n=1}^\infty |P_n(z)|^2 \,\la, 
\end{equation}
where ${|z| = 1}$ and 
$\la$ is the Lebesgue measure on the unit circle, 
and the main ingredient in this product is a sequence of trigonometric polynomials $P_n(z)$ 
with coefficients $0$ and~$1$ (see \cite{ChoksiNadkarni,KlemesReinhold,AbPaPr}). 
Let us denote this class of polynomials as~$\cM$, 
\begin{equation}
	\cM = \Bigl\{ 
		P(z) = \sum_{y=0}^{q-1} z^{\omega(y)} \where \omega(y) \in \Set{Z},\ \omega(y) < \omega(y+1) 
	\Bigr\}. 
\end{equation}
Thus, understanding analytic properties of the class~$\cM$ could be a key to 
spectral properties of rank one transformations. 
For example, a hypotetic {\it flatness property\/} for a polynomial ${P \in \cM}$ would 
help to find a rank one map with absolutely continuous component in the spectrum. 
A~polynomial $P(z)$ is $\eps$-{\it flat\/} if ${ \|q^{-1/2}|P| - 1\| < \eps}$ in some norm $\|\cdot\|$. 
If we deal with a rank one transformation several limitations to the class of polynomials turn out. 
Points $\omega(y)$ must be almost equividistant, ${\omega(y+1)-\omega(y) \sim h}$, 
and constructing a rank one map we cannot involve any polynomial in~$\cM$. 
The main idea of iceberg transformation is to extend the class of dynamical systems in order 
to make the underlying class of polynomials richer. 

\subsection{Rank one maps: Symbolic interpretation}\label{sMotivRanknOneSymb} 

It was discovered that rank one property can be expressed in purely combinatorial terms: 
$T$ is a rank one map if and only if a typical orbit for $T$ 
is $\eps$-covered by words which are $\eps$-close to a single word $W(\eps)$ 
for arbitrary $\eps$. 
Furthermore, rank one maps can be described using one of the three equivalent definitions: 
measure-theoretic (definition~\ref{defRankOne}), sybmolic and geometrical definitions see. 
Since iceberg transformations extend in a~sense rank one systems 
we will find the corresponding three parallel interpretations for this concept. 
%
%


\begin{defn}\label{defRankOneSymbolic} 
Consider a sequence of words $W_n$ in alphabet~$\Set{A}$ such that 
\begin{equation}
	W_{n+1} = W_n 1^{s_{n,1}} W_n 1^{s_{n,2}} W_n \ldots 1^{s_{n,q_n}} W_n, 
\end{equation}
where symbol ``$1$'' is used to create {\it spacers\/} between words. 
Suppose that 
\begin{equation}\label{eFiniteMeasRankOne} 
	\prod_{n=1}^\infty \frac{h_{n+1}}{q_n h_n} < \infty. 
\end{equation}
This sequence of words is the coding of an orbit starting from the base of $n$-th tower 
according to the partition associated with the alphabet~$\Set{A}$.  
\end{defn}

\section{Iceberg map: Formal definition}\label{sIcebergMapDef} 

The purpose of this section is to discuss the formal definition of iceberg transformations. 
Actually we consider the family of transformations given by the construction 
discussed in section~\ref{sIcebergAtAGlance}. 
At the same time an abstract definition of {\it iceberg approximation\/} is introduced 
(see def.\,\ref{defIcebergApproximation}) 
and question~\ref{qIcebergAndRank}.ii is formulated concerning the following: 
is it possibile to construct a~refined%
\footnote{Iceberg $\iceberg_{n+1}$ refines $\iceberg_n$ if $\cA(\iceberg_n) \subseteq \cA(\iceberg_{n+1})$, 
	where $\cA(\cP)$ is the $\sigma$-algebra generated by partition~$\cP$.} 
sequence of icebergs approximating map~$T$ 
like in the case of rank one transformations?

\subsection{Iceberg} 

Consider an invertible measure preserving transformation $T$ on the standard Lebesgue space $(X,\cA,\mu)$, 
and let $h$ be a positive integer number.

\begin{figure}[th]
  \centering
  \unitlength=1mm
  \includegraphics{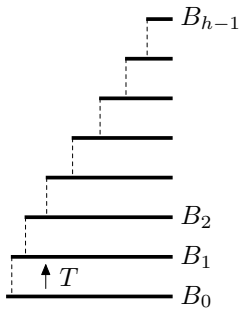} 
  \caption{An icelet} 
  \label{fIcelet}
\end{figure}

\begin{defn}
An {\it icelet\/}%
\footnote{The term {\it icelet\/} is proposed by V.\,Ryzhikov.} 
is a sequence of disjoint sets ${\{B_0,B_1,\ldots,B_{h-1}\}}$ 
such that ${B_{j+1} \subseteq TB_j}$ and ${B_j \in \cA}$. 
\end{defn}

\begin{defn}
A {\it generic iceberg} is a sequence of disjoint measurable sets 
\begin{equation}
	\iceberg = \{ B_{-h+1},\ldots,B_{-1},B_0,B_1,\ldots,B_{h-1} \} 
\end{equation}
such that 
${B_{j+1} \subseteq TB_j}$ for ${j \ge 0}$ and 
${B_{j-1} \subseteq T^{-1}B_j}$ for ${j \le 0}$ (see fig.\,\ref{fdynCAT}). 
Notice that $\iceberg$ is composed of two icelets with common base $B_0$, 
one normal ({\it direct\/}) and one reverse, where {\it reverse icelet\/} is an icelet for~$T^{-1}$. 
We will use notation ${\cup\iceberg = \bigcup_j B_j}$. 
\end{defn}

\begin{figure}[th]
  \centering
  \unitlength=1mm
  \includegraphics{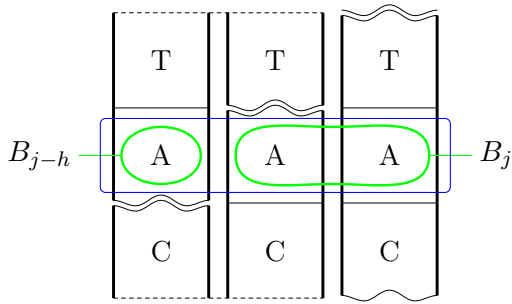} 
  \caption{Partition ${\ice = \{B_j \cup B_{j-h}\}}$ associated with a cyclic iceberg. 
  	The picture is actually drawn on a cylinder, the dashed lines indicate that the top and the bottom of each colums are glued. 
  	These columns of a~cyclic iceberg (we call it {\it fat\/} columns) can be also plot like unsorted set of cycles with 
  	different cut points as shown on fig.\,\ref{fCATRolles}.} 
  \label{fToricCAT}
\end{figure}

\begin{defn}
Let $\iceberg$ be a generic iceberg. 
We say that $\iceberg$ is {\it cyclic\/} if for any point ${x \in B_0}$ 
the total number of iterations towards future and past until leaving the iceberg is equal to $h$, 
i.e.\ ${\# \{ j \in \Set{Z} \where T^j x \in B_j\} \equiv h}$. 
Let us define the {\it cyclic iceberg partition\/} 
\begin{equation}
	\ipart = \{B_j \cup B_{j-h} \where j=0,1,\ldots,h-1\}, 
\end{equation}
which is evidently refined by~$\iceberg$ (see fig.\,\ref{fToricCAT}). 
\end{defn}

\begin{figure}[th]
  \centering
  \unitlength=1mm
  \includegraphics{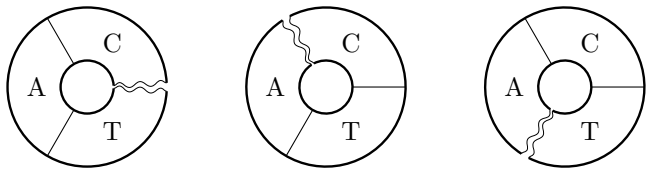} 
  \caption{Unordered set of copies of the word ``$\CAT$'' with different cut points.} 
  \label{fCATRolles}
\end{figure}

\begin{rem}
If iceberg is cyclic, we can redraw fig.\,\ref{fCAT} moving $h$ points up levels with negative indexes 
(in~fact, all icebergs on the illustrations are cyclic). 
After the modification a cyclic iceberg looks like a set of columns 
\begin{equation}
	V_k = \{V_{k,j} \where j \in \Set{Z}_h\}. 
\end{equation}
with different cut points at position ${k = 0,1,\ldots,h-1}$ as shown on fig.\,\ref{fToricCAT}. 
The meaning of cutting will be clear in the forthcoming discussion on ergodic map generated by icebergs. 

A~column before cutting is a sequence of sets $C_{k,j}$ indexed by ${j \in \Set{Z}_h}$ 
with ${\mu(C_{k,j_1}) \equiv \mu(C_{k,j_2})}$ 
such that ${\mu(TC_{k,j} \mid C_{k,j+1}) \approx 1}$
, i.e.\ the set $TC_{k,j}$ is close to~$C_{k,j+1}$. 
When we add cut point at position $k$ this property holds for all $j$ except only one ${j_\times = k}$, 
namely, the case when we pass from level $k$ to level ${k+1}$.

\begin{figure}[th]
  \centering
  \unitlength=1mm
  \begin{picture}(108,52)
      \footnotesize
      \put(0,21)
      {
      	\put(16,0)
      	{
        	\linethickness{1pt}
        	\put(0,0){\line(1,0){8}}
        	\put(0,0){\line(0,1){24}}
        	\put(0,24){\line(1,0){8}}
        	\put(8,0){\line(0,1){24}}
        	\put(2.25,2.4){\large $\syC_3$}
        	\put(2.25,10.4){\large $\syA_3$}
        	\put(2.25,18.4){\large $\syT_3$}
        	\linethickness{0.2pt}
        	\put(0,8){\line(1,0){8}}
        	\put(0,16){\line(1,0){8}}
        }
      	\put(8,-8)
      	{
        	\linethickness{1pt}
        	\put(0,0){\line(1,0){8}}
        	\put(0,0){\line(0,1){24}}
        	\put(0,24){\line(1,0){8}}
        	\put(8,0){\line(0,1){24}}
        	\put(2.25,2.4){\large $\syT_2$}
        	\put(2.25,10.4){\large $\syC_2$}
        	\put(2.25,18.4){\large $\syA_2$}
        	\linethickness{0.2pt}
        	\put(0,8){\line(1,0){8}}
        	\put(0,16){\line(1,0){8}}
        }
      	\put(0,-16)
      	{
        	\linethickness{1pt}
        	\put(0,0){\line(1,0){8}}
        	\put(0,0){\line(0,1){24}}
        	\put(0,24){\line(1,0){8}}
        	\put(8,0){\line(0,1){24}}
        	\put(2.25,2.4){\large $\syA_1$}
        	\put(2.25,10.4){\large $\syT_1$}
        	\put(2.25,18.4){\large $\syC_1$}
        	\linethickness{0.2pt}
        	\put(0,8){\line(1,0){8}}
        	\put(0,16){\line(1,0){8}}
        }
    	}
    	%
    	%
      \put(40,21)
      {
      	\put(16,0)
      	{
        	\linethickness{1pt}
        	\put(0,0){\line(1,0){8}}
        	\put(0,0){\line(0,1){24}}
        	\put(0,24){\line(1,0){8}}
        	\put(8,0){\line(0,1){24}}
        	\put(2.25,2.4){\large $\syA_1$}
        	\put(2.25,10.4){\large $\syT_1$}
        	\put(2.25,18.4){\large $\syC_1$}
        	\linethickness{0.2pt}
        	\put(0,8){\line(1,0){8}}
        	\put(0,16){\line(1,0){8}}
        }
      	\put(8,-8)
      	{
        	\linethickness{1pt}
        	\put(0,0){\line(1,0){8}}
        	\put(0,0){\line(0,1){24}}
        	\put(0,24){\line(1,0){8}}
        	\put(8,0){\line(0,1){24}}
        	\put(2.25,2.4){\large $\syC_3$}
        	\put(2.25,10.4){\large $\syA_3$}
        	\put(2.25,18.4){\large $\syT_3$}
        	\linethickness{0.2pt}
        	\put(0,8){\line(1,0){8}}
        	\put(0,16){\line(1,0){8}}
        }
      	\put(0,-16)
      	{
        	\linethickness{1pt}
        	\put(0,0){\line(1,0){8}}
        	\put(0,0){\line(0,1){24}}
        	\put(0,24){\line(1,0){8}}
        	\put(8,0){\line(0,1){24}}
        	\put(2.25,2.4){\large $\syT_2$}
        	\put(2.25,10.4){\large $\syC_2$}
        	\put(2.25,18.4){\large $\syA_2$}
        	\linethickness{0.2pt}
        	\put(0,8){\line(1,0){8}}
        	\put(0,16){\line(1,0){8}}
        }
    	}
    	%
    	%
      \put(80,21)
      {
      	\put(16,0)
      	{
        	\linethickness{1pt}
        	\put(0,0){\line(1,0){8}}
        	\put(0,0){\line(0,1){24}}
        	\put(0,24){\line(1,0){8}}
        	\put(8,0){\line(0,1){24}}
        	\put(2.25,2.4){\large $\syT_2$}
        	\put(2.25,10.4){\large $\syC_2$}
        	\put(2.25,18.4){\large $\syA_2$}
        	\linethickness{0.2pt}
        	\put(0,8){\line(1,0){8}}
        	\put(0,16){\line(1,0){8}}
        }
      	\put(8,-8)
      	{
        	\linethickness{1pt}
        	\put(0,0){\line(1,0){8}}
        	\put(0,0){\line(0,1){24}}
        	\put(0,24){\line(1,0){8}}
        	\put(8,0){\line(0,1){24}}
        	\put(2.25,2.4){\large $\syA_1$}
        	\put(2.25,10.4){\large $\syT_1$}
        	\put(2.25,18.4){\large $\syC_1$}
        	\linethickness{0.2pt}
        	\put(0,8){\line(1,0){8}}
        	\put(0,16){\line(1,0){8}}
        }
      	\put(0,-16)
      	{
        	\linethickness{1pt}
        	\put(0,0){\line(1,0){8}}
        	\put(0,0){\line(0,1){24}}
        	\put(0,24){\line(1,0){8}}
        	\put(8,0){\line(0,1){24}}
        	\put(2.25,2.4){\large $\syC_3$}
        	\put(2.25,10.4){\large $\syA_3$}
        	\put(2.25,18.4){\large $\syT_3$}
        	\linethickness{0.2pt}
        	\put(0,8){\line(1,0){8}}
        	\put(0,16){\line(1,0){8}}
        }
    	}
  \end{picture}
  \caption{Cyclic rotation of a word indexing levels of the cyclic iceberg leads 
  	to another choice of the base set and the letter defined to be the origin.} 
  \label{fCyclingCAT}
\end{figure}
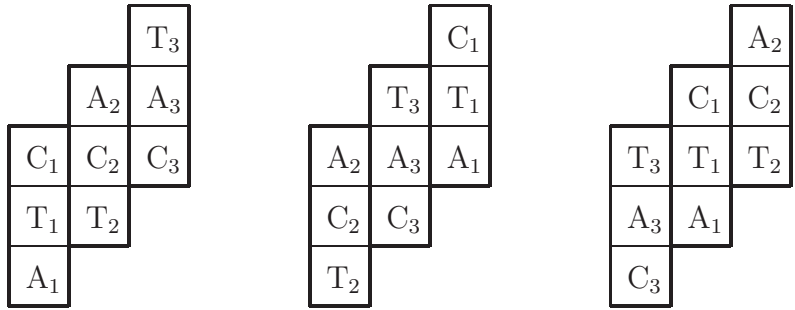

A principal feature of a cyclic iceberg is the posibility of it's cyclic rotation 
which in fact means that we can take any level as the base set of the iceberg (see fig.\,\ref{fCyclingCAT}). 
Clearly a choice of a base set corresponds to a choice of a point in 
the homogeneous space ${\Set{Z}_h}$ playing role of origin. 
Since the acting group $\Set{Z}$ is Abelian this homogeneous space is a group. 
Thus, we have seen that the levels os the iceberg are indexed by elements of the group $\Set{Z}_h$. 
Furthermore, the way we draw an iceberg as a kind of parallelogram is explained 
by the idea to draw the columns in order making the cut points sequence accending, 
and from dynamical point of view an iceberg is just an unordered set of columns 
as shown on fig.\,\ref{fCATRolles}, where each column corresponds to a specific choice of the cut~point. 

What is the meaning of cut point? If we consider a word with glued first and last letters 
as discrete circle $\Set{Z}_h$ then the cut point originates from the boundary of 
a fundamental domain associated with lattice $h\Set{Z}$.  
\end{rem}

\subsection{Iceberg approximation as dynamical invariant} 

In the sequel for simplicity we will use the term {\it iceberg\/} 
as shorter equivalent of the term {\it cyclic iceberg\/}. 
Actually the notion of {\it cyclic iceberg\/} is connected with the concept of 
iceberg approximation explained in the next definition, and defined using rotation operator~$\rho_\a$. 

\begin{defn}\label{defIcebergApproximation} 
We say that a measure preserving map $T$ admits {\it iceberg approximation\/} if 
given a finite measurable partition associated with an alphabet $\Set{A}$ 
for any ${\eps > 0}$ there exists a word $W_\eps$ in the alphabet $\Set{A}$ such that 
for ${(1-\eps)}$-fraction of orbits $(x_n)$ the subword of length $N(\eps)$ in $(x_n)$ 
starting from $x_0$ is $\eps$-covered by rotations $\rho_\a(W_\eps)$ of the word~$W_\a$. 
\end{defn}

\begin{thm}
Iceberg approximation property is a dynamical invariant. 
\end{thm}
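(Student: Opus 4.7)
The plan is to exploit the fact that iceberg approximation, as phrased in Definition~\ref{defIcebergApproximation}, is a statement purely about the joint distributions of symbolic codings of orbits with respect to a finite partition. Such joint distributions are, by their very nature, preserved by any measure-theoretic isomorphism between Lebesgue spaces, so the conclusion should follow essentially formally once the symbolic formulation is properly identified.

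Concretely, I would proceed as follows. Let $\phi \colon (X,\mathcal{A},\mu,T) \to (Y,\mathcal{B},\nu,S)$ be a measure-theoretic isomorphism and assume that $T$ admits iceberg approximation. Given a finite measurable partition $\xi = \{Q_a : a \in \mathcal{A}\}$ of $Y$ labelled by an alphabet $\mathcal{A}$ and an $\varepsilon > 0$, pull $\xi$ back to the partition $\eta = \phi^{-1}\xi$ of $X$. Applying the hypothesis to $(T,\eta,\varepsilon)$ produces a word $W_\varepsilon \in \mathcal{A}^{*}$ and a length $N(\varepsilon)$ such that the set
\[
E \;=\; \{\, x \in X : \eta(x)\,\eta(Tx)\ldots\eta(T^{N(\varepsilon)-1}x) \text{ is } \varepsilon\text{-covered by rotations } \rho_\alpha(W_\varepsilon)\,\}
\]
has $\mu$-measure at least $1-\varepsilon$. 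Since $\phi T = S\phi$ on a co-null set and $\eta = \phi^{-1}\xi$, the $\eta$-name of $x$ under $T$ coincides with the $\xi$-name of $\phi(x)$ under $S$ for almost every $x$. Consequently $\phi(E)$ is a subset of $Y$ with $\nu$-measure at least $1-\varepsilon$, and every point of $\phi(E)$ has its $\xi$-name of length $N(\varepsilon)$ $\varepsilon$-covered by the \emph{same} rotations of the same word $W_\varepsilon$. Thus $W_\varepsilon$ witnesses iceberg approximation for $(S,\xi)$.

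The one point that deserves explicit verification is that ``$\varepsilon$-covering by rotations $\rho_\alpha(W_\varepsilon)$'' is a purely combinatorial predicate on finite strings in $\mathcal{A}^{N(\varepsilon)}$: the rotation operator $\rho_\alpha$ is defined on words and the covering relation makes no reference to the ambient measure space, so the predicate transports unambiguously from $X$ to $Y$ through the isomorphism. The only real bookkeeping obstacle is handling the null set on which $\phi$ may fail to commute with the dynamics, and this is dispatched by the standard Lebesgue-space device of replacing $\phi$ with its restriction to a $T$- and $S$-invariant co-null subset on which $\phi T = S\phi$ holds pointwise. With that in hand, the transport $E \mapsto \phi(E)$ is measure-preserving and the proof is complete; no further analytic input is required, confirming that iceberg approximation is indeed an invariant of measure-theoretic conjugacy.
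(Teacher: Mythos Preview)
Your argument is correct and is essentially the same as the paper's: pull back a given partition along the isomorphism, observe that the $\eta$-coding of a $T$-orbit coincides with the $\xi$-coding of its image $S$-orbit, and conclude that the combinatorial covering condition transfers verbatim. The paper's proof is just a one-sentence version of what you wrote; your additional care about null sets and the purely combinatorial nature of the covering predicate only makes the reasoning cleaner.
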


\begin{figure}[th]
  \centering
  \unitlength=1mm
  \includegraphics{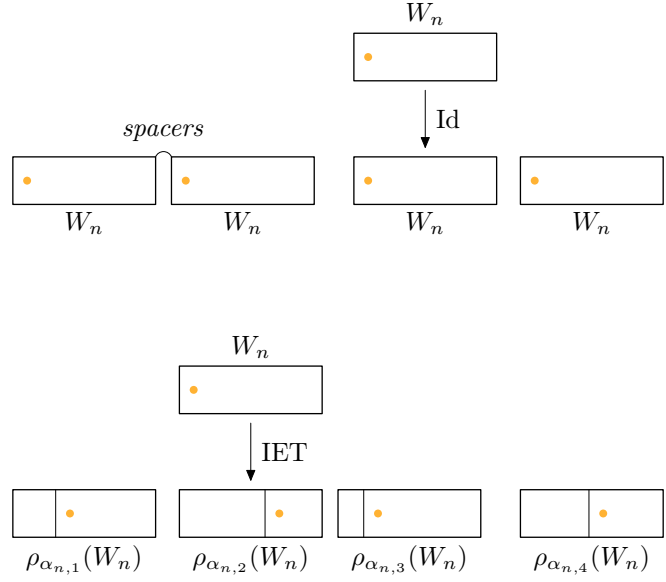} 
  \caption{Comparing rank one and iceberg approximation} 
  \label{fIcebergAndRank}
\end{figure}

A question on finding an invariant property for iceberg transformations is proposed by B.\,Weiss. 
To~see the difference between rank one and iceberg approximation let us draw 
compare orbit structure for some rank one and some iceberg transformation, see fig.\,\ref{fIcebergAndRank}. 
%

\begin{proof} 
If two transformations $T$ and $S$ are equivalent via a measure preserving map $I$ then 
we can map a finite partition $\cP$ for one map to a finite partition $I(\cP)$ for another,
and if the $\cP$-coding of $T$-orbits is covered by words $\rho_{a_j}(W)$ the same property 
holds for $I(\cP)$-coding of $S$-orbits.
\end{proof}

\subsection{Construction of iceberg transformation} 

%
%

\begin{defn}
An {\it iceberg transformation\/} ({\it without spacers\/}) 
is approximated by a sequence of icebergs $\ice_n$ of height $h_n$ 
such that any next iceberg in the sequence refines the privious one 
according to the rotated words concatenation 
procedure defined in section~\ref{sIcebergAtAGlance}. 
%
Let us fix heights $h_n$ and rotations $\a_{n,y}$, and suppose that  
\begin{equation}
	h_{n+1} = q_n h_n, \quad h_0 = 1, \quad q_n \in \Set{Z}, \quad q_n \ge 2. 
\end{equation}
Set ${X_n = \{0,1,\ldots,h_n-1\}}$, and consider projections 
\begin{equation}
    \phi_n \Maps X_{n+1} \to X_n, \qquad n = 0,1,\ldots 
\end{equation}
where 
\begin{equation}
    \phi_n(yh_n+t) = t - \a_{n,y} \pmod{h_n}, \qquad 
    0 \le t < h_n, \quad y = 0,1,\ldots,q_n-1. 
\end{equation}
Restricted to any interval $[yh_n,(y+1)h_n)$ the map $\phi_n|_{[yh_n,(y+1)h_n)}$ is exactly the inverse map 
to the rotation operator~$\rho_{\a_{n,y}}$ in concordance with~\eqref{eDefWnPlusOne}. 
It is clear that ${\mu_{n+1}(\phi_n^{-1}A) = \mu_n(A)}$ for any set~$A$. 
Thus, we can endow the inverse limit $X$ of spaces $(X_n,\mu_n)$ with Borel measure $\mu$ such that 
${\mu(\{x \where x_n = t\}) \equiv h_n^{-1}}$, where 
\begin{equation}
	X = \{x = (x_0,x_1,\ldots,x_n,\ldots) \where \phi_n(x_{n+1}) = x_n\}. 
\end{equation}
Clearly, the $n$-th coordinate $x_n$ satisfies condition 
\begin{equation}\label{ePlusOne}
	\phi_{n-1}(x_n+1) = \phi_{n-1}(x_n) + 1 
\end{equation}
with $\mu$-probality $1/h_{n-1}$ (the probability to encounter the cut point). 
Since ${h_n \to \infty}$ condition~\eqref{ePlusOne} holds for coordinates $x_{n_0},x_{n_0+1},\ldots$ 
starting from some index~$n_0$ with $\mu$-probability~$1$. We call these points $x$ {\it regular}. 
Let us define the {iceberg transformation\/} map $T$ for a regular point $x$ by formula 
\begin{equation}\label{eDefT}
	T \Maps (x_0,\ldots,x_{n_0-1},x_{n_0},x_{n_0+1},\ldots) \mapsto (x_0^{++},\ldots,x_{n_0-1}^{++},x_{n_0}+1,x_{n_0+1}+1,\ldots), 
\end{equation}
where the head of the sequence is recovered in the only way such that $Tx$ becomes a correct sequence 
with the property ${\phi_n((Tx)_{n+1}) = (Tx)_n}$, namely we set 
\begin{equation} 
	x_{n_0} + 1 				\;\stackrel{\phi_{n_0-1}}\mapsto \; 
	x_{n_0-1}^{++} 			\;\stackrel{\phi_{n_0-2}}\mapsto \; 
	x_{n_0-2}^{++} 			\;\stackrel{\phi_{n_0-3}}\mapsto \; 
	\ldots 							\;\stackrel{\phi_1}\mapsto \; 
	x_1^{++} 						\;\stackrel{\phi_0}\mapsto \; 
	x_0^{++}. 
\end{equation}
\end{defn}

\begin{lem}
The maps $T$ is a measure preserving invertible transformation of the space $(X,\mu)$. 
\end{lem}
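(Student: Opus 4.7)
The statement is a routine inverse-limit verification, and I plan to carry it out in three stages. First, I would establish that the set $X^*$ of regular points has full $\mu$-measure. Writing $x_n = yh_{n-1}+t$ with $0 \le t < h_{n-1}$, the condition $\phi_{n-1}(x_n+1) = \phi_{n-1}(x_n)+1$ fails precisely when $t = h_{n-1}-1$, an event of $\mu$-probability $1/h_{n-1}$. Since the assumption $q_n \ge 2$ forces $h_n \ge 2^n$, the series $\sum 1/h_{n-1}$ is summable, and Borel--Cantelli gives a well-defined first-regularity index $n_0(x)<\infty$ for $\mu$-almost every $x$.

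Next, I would verify that formula \eqref{eDefT} actually produces an element of $X$. For coordinates $n\ge n_0(x)$, the required compatibility $\phi_n((Tx)_{n+1})=(Tx)_n$ is exactly the regularity condition at level $n+1$; for $n<n_0(x)$, the head values $x_m^{++}$ are defined by successive application of $\phi_m$ starting from $x_{n_0}+1$, so the compatibility holds by construction. Measurability of $T$ is immediate, since on each set $\{n_0(x)\le n\}$ the map depends only on the coordinates of index $\ge n$. The inverse $T^{-1}$ is then built by the mirror formula with $x_{n_0}-1$ in place of $x_{n_0}+1$, on the dual full-measure set $X^{**}$ defined by $\phi_{n-1}(x_n-1)=\phi_{n-1}(x_n)-1$ eventually. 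A direct substitution shows $Tx\in X^{**}$ whenever $x\in X^*$, and the two maps are mutual inverses on $X^*\cap T^{-1}(X^{**})$, a full-measure subset of $X$.

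The real content is the measure-preservation in the third stage, which I intend to reduce to the trivial fact that the cyclic shift preserves the uniform measure on each $X_n$. The key observation is that on the full-measure set $X^*_{(n)}=\{x\in X^* \where n_0(x)\le n\}$ one has $(Tx)_n\equiv x_n+1\pmod{h_n}$, so the coordinate projection $x\mapsto x_n$ intertwines $T|_{X^*_{(n)}}$ with the cyclic shift $R_n$ on $X_n$. Since the push-forward of $\mu$ under this projection is exactly $\mu_n$, and $R_n$ preserves $\mu_n$, one obtains $\mu(T^{-1}A)=\mu(A)$ for every cylinder $A=\{x\where x_n=t\}$ up to an error bounded by $\mu(X\setminus X^*_{(n)})\to 0$. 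A standard monotone-class argument then extends the equality to the full Borel $\sigma$-algebra on $X$. The only mildly delicate point is the bookkeeping in this last step, but it involves nothing beyond finite-dimensional approximation; no obstacle here is specific to the iceberg construction itself.
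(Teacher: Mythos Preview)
Your proposal is correct and follows essentially the same route as the paper: reduce measure preservation to the fact that the coordinate projection $x\mapsto x_n$ intertwines $T$ (up to a small error) with the rotation $t\mapsto t+1$ on $X_n$, and construct $T^{-1}$ by the mirror formula using $t\mapsto t-1$. Your write-up is considerably more careful than the paper's very brief argument---in particular you spell out the Borel--Cantelli step and the monotone-class extension---but the underlying ideas are identical.
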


\begin{proof}
On the $n$-th level the map $T$ is close to the rotation map ${t \mapsto t+1}$ 
which preserves $\mu_n$, and we can take arbitrary~$n$, hence, $T$ is measure preserving. 

Invertibility of~$T$ is a corollary of an important observation that 
$T$ is generated by the underlying transformation ${t \mapsto t+1}$ 
of the acting group~$\Set{Z}$ having ${t \mapsto t-1}$ as inverse. 
Indeed, if look at $n$-th level the transformation ${t \mapsto t+1 \pmod{h_n}}$ 
of the homogeneous space $X_n$ associated with $n$-th iceberg is induced by 
the shift transformation ${t \mapsto t+1}$ of~$Z$. 
Thus, the inverse map for $T$ is defined in the same way as~$T$ but using the map ${t \mapsto t-1}$. 

Walking by $T$ and $T^{-1}$ 
it is interesting to observe the reversibility of jumps 
\begin{equation}\label{eTwoJumps}
	T \Maps x_n \mapsto x_n^{++} \quad \text{and} \quad T^{-1} \Maps \tilde x_n \mapsto \tilde x_n^{--}, 
\end{equation} 
where ${\tilde x_n = x_n^{++}}$. 
Under a {\it jump\/} we understand the case when ${x_n^{++} \not= x_n + 1}$. 
In fact, crossing the cut point ($\times$) after the (first) letter ``$\syC$'' 
in a join ${\ldots\ATC_\times \TCA\ldots}$ we jump to the position of letter ``$\syT$'' 
instead of passing to the next letter ``$\syA$'' in the original order ${\syC \mapsto \syA \mapsto \syT \mapsto \syC}$. 
\end{proof}

\subsection{Interpretation of dynamics}

\begin{figure}[th]
  \centering
  \unitlength=1mm
  \includegraphics{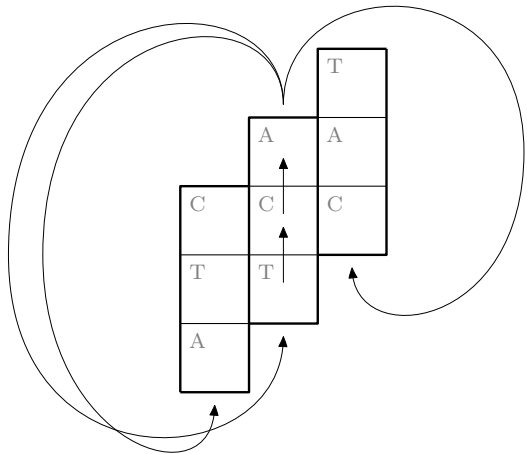} 
  \caption{Jumps under the action of Poincar\'e map} 
  \label{fJumpingCAT}
\end{figure}

Let us draw on one picture {\em two\/} steps of the iceberg map construction 
to understand clearly the dynamics of~$T$. We are going to start with 
the picture shown at fig.\,\ref{fCAT} and to gradually add details on this picture. 
The first step is to interpret the Poincar\'e map $\tilde T_{E_n}$. 

\begin{figure}[th]
  \centering
  \unitlength=1mm
  \includegraphics{iceart5.eps} 
  \caption{} 
  \label{fJumpsOnXn}
\end{figure}

\begin{defn}
Let us call {\it fat columns} the vertical columns $V_k$ on figures \ref{fCAT}, \ref{fdynCAT} and \ref{fJumpingCAT}. 
Intersecting fat column $V_k$ with levels we get partition 
\begin{equation}
	V_k = V_{k,-h+k} \sqcup V_{k,-h+k+1} \sqcup \ldots \sqcup V_{k,k-1}, \qquad V_{k,j} = V_k \cap B_j, 
\end{equation}
where ${k = 1,2,\ldots,h}$. Observe that the number of all fat columns is~$h_n$. 
Further, let us use the term {\it thin column\/} for a vertical subcolumn of a fact column 
corresponding to one rotated copy of the word~$W_n$ inside the word~$W_{n+1}$ 
(see fig.\,\ref{fCATDetailed}). In other wordth, a thin column corresponds to 
an interval ${ [yh_n,(y+1)h_{n+1}) }$ in~$X_{n+1}$. There are totally $q_n$ thin columns. 
\end{defn}

\begin{figure}[th]
  \centering
  \unitlength=1mm
  \includegraphics{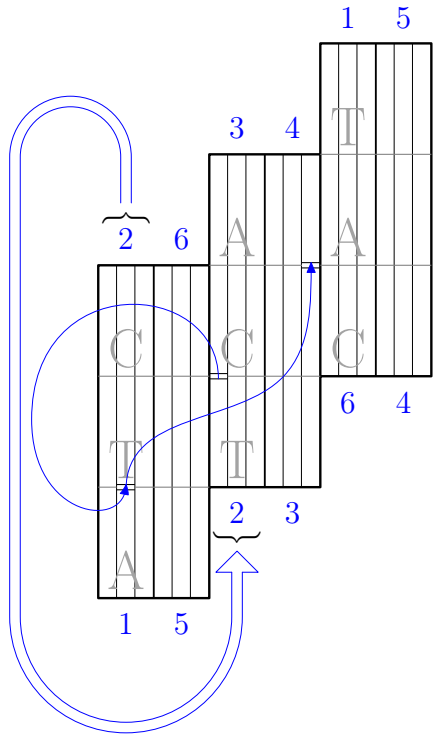} 
  \caption{Iceberg transformation dynamics.} 
  \label{fCATDetailed}
\end{figure}

\begin{figure}[th]
  \centering
  \unitlength=1mm
  \includegraphics{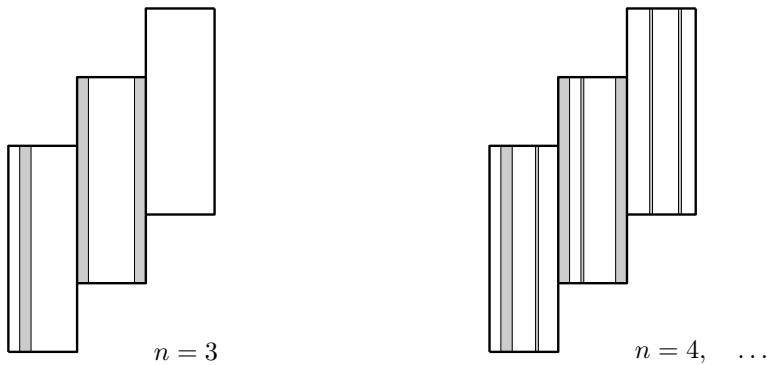} 
  \caption{Grayed colomns contain jumps. 
  	If a point ${x \in B_j}$ and if $x$ is located in 
  	the white area ({\it body\/}) then ${Tx \in B_{j+1}}$.} 
  \label{fBodyOfCAT}
\end{figure}

Clearly the meaning of Poincar\'ee map is to express the connection of two thin columns 
corresponding to a pair of adjacent rotated copies $\rho_{n,y}(W_n) \rho_{n,y+1}(W_n)$ 
which are subwords of~$W_{n+1}$. For example, if this pair is $\CAT.\ATC$ the Poincar\'ee map 
sends the top set in a thin column included in fat column $V_3$~(``$\CAT$'') to the bottom set of some 
thin column in fat column $V_1$ (``$\ATC$''). From the point of view of observer watching the coordinate~$x_n$
in the homogeneous space $\Set{Z}_{h_n}$ the following occurs: with probability ${1-\eps}$ point $x_n$ 
moves one step forward ${x_n \mapsto x_n+1}$, and with small probability $\eps$ it {\em jumps\/} to 
any other point in~$\Set{Z}_{h_n}$. At this level we cannot say precisely how this jumpings are 
distributed (this is another definition of iceberg, cyclicity means that forward and backward tracks 
before jumping have common length~$h_n$). In terms of iceberg we can explain this behavior as follows. 
A~point $x$ under~$T$ moves up with probability $(1-\eps_2)$, and reaching the top level in a fat column 
it make a random jump to some bottom set in another fat column (see fig.\,\ref{fJumpsOnXn}). 
In other words we are going to show geometrically 
what happens when we rotate and concatenate copies of word ${W_2 = \CAT.\ATC.\TCA.\TCA.\CAT.\ATC}$. 
Suppose that ${q_2 = 3}$ and $W_3$ is the result of concatenation of 
the following three rotated copies 
\begin{align}
	{\mathrm{ CATATCT_\times CATCACATATC }} \\
	{\mathrm{ CATA_\times TCTCATCACATATC }} \notag \\
	{\mathrm{ CATATCTCATC_\times ACATATC }} \notag 
\end{align}
namely, 
\begin{equation}
	W_3 = {\scriptstyle{\mathrm{ 
		CATCACATATC \,|\, CATATCT \:\seppoint\: 
		TCTCATCACATATC \,|\, CATA \:\seppoint\: 
		ACATATC \,|\, CATATCTCATC
		}}}
\end{equation}
At figure~\ref{fCATDetailed} both first and second Poincar\'e maps are shown, 
and we can continue this procedure. Let us mark the columns containing jumps as grayed 
(see fig.\,\ref{fBodyOfCAT}).

\subsection{Approximation and rank}

\begin{lem}\label{lemIcebergApprox}
Let $T$ be an iceberg transformation. 
A function $f \in L^2(X,\mu)$ can be approximated by a sequence of $\ice_n$-measurable functions. 
In other words, $T$ admits iceberg approximation. 
In particular, function $f$ can be approximated by a sequence of functions $f_n$, where 
$f_n$ is constant on levels of icebergs~$\iceberg_n$. 
\end{lem}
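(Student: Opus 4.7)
The plan is to exploit the inverse limit structure of $(X,\mu)$. The key observation, essentially built into the construction, is that the atoms of the cyclic iceberg partition $\ice_n$ correspond bijectively to the points of $X_n = \Set{Z}_{h_n}$: once the levels $B_j$ and $B_{j-h_n}$ of $\iceberg_n$ are glued cyclically, the resulting atom indexed by $j$ coincides with the coordinate cylinder $\{x \in X \where x_n = j\}$. First I would write this correspondence down explicitly from the description of $X$ as $\varprojlim X_n$ and the dynamics~\eqref{eDefT}, and note that the finer level partition of $\iceberg_n$ (which splits each atom of $\ice_n$ into at most two levels $B_j$ and $B_{j-h_n}$) is a subpartition of $\ice_{n+1}$ at the next stage, since those two levels of $\iceberg_n$ lie in distinct fat columns of~$\iceberg_{n+1}$.

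Given this identification, the $L^2$-approximation claim reduces to a standard fact. Since $X$ is the inverse limit of the finite spaces $X_n$, the Borel $\sigma$-algebra $\cA$ is generated modulo null sets by the coordinate projections $x \mapsto x_n$, hence by $\bigvee_n \sigma(\ice_n)$. Therefore the conditional expectations $f_n = \mathbb{E}(f \mid \ice_n)$ form an $L^2$-bounded martingale converging to $f$ in $L^2$ by the martingale convergence theorem. Using the remark of the previous paragraph, one can replace $\ice_n$ by the level partition of~$\iceberg_{n+1}$ to obtain approximating functions that are constant on the individual levels of an iceberg, yielding both measure-theoretic assertions.

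For iceberg approximation in the symbolic sense of Definition~\ref{defIcebergApproximation}, fix a finite coding partition $\cP$ labelled by an alphabet $\Set{A}$, and for given $\eps > 0$ approximate $\cP$ within $\eps/2$ in symmetric-difference distance by a partition $\tilde\cP$ constant on the levels of $\iceberg_n$ for $n$ sufficiently large. The $\tilde\cP$-labelling of the $h_n$ levels determines a word $W_\eps := W_n \in \Set{A}^{h_n}$ cyclically indexed by $\Set{Z}_{h_n}$. Because the action of $T$ on the coordinate $x_n$ is simply $x_n \mapsto x_n + 1$ outside the ``edge'' of $\iceberg_n$, whose density is $O(1/h_n)$, the $\tilde\cP$-coding of a typical orbit segment of length $N$ with $h_n \ll N \ll h_{n+1}$ agrees with a concatenation of rotations $\rho_\a(W_n)$ up to $O(N/h_n)$ cut-point positions. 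Combining this with the $\eps/2$ approximation of $\cP$ by $\tilde\cP$ and applying Birkhoff's theorem to the indicator of the body of $\iceberg_n$, one obtains for large enough $N(\eps)$ a $(1-\eps)$-fraction of orbits whose length-$N(\eps)$ prefix is $\eps$-covered by rotations of~$W_\eps$.

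The main obstacle is bookkeeping in the cyclic setting: making the identification of iceberg levels with inverse-limit coordinates precise, and quantifying what fraction of a long orbit segment lies in the body rather than at the cut points of $\iceberg_n$ (which requires the assumption that $q_n \to \infty$ fast enough that edge contributions are negligible). Both tasks are routine, and once carried out the $L^2$-approximation follows from the martingale theorem while the symbolic version follows from the ergodic theorem.
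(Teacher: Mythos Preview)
The paper does not supply a proof of this lemma: after the statement there is only a remark explaining what $\iceberg_n$- and $\ice_n$-measurability mean, and the text then proceeds directly to Lemma~\ref{lemIcebergStructure}. Your argument is the natural one and is correct. The identification of the atom $B_j \cup B_{j-h_n}$ of $\ice_n$ with the coordinate cylinder $\{x \in X : x_n = j\}$ is exactly how the inverse limit $X = \varprojlim X_n$ is set up, so $\bigvee_n \sigma(\ice_n)$ generates $\cA$ and martingale (or increasing $\sigma$-algebra) convergence finishes the $L^2$ claim. Your refinement observation $\sigma(\ice_n) \subseteq \sigma(\iceberg_n) \subseteq \sigma(\ice_{n+1})$ is also right: knowing $x_{n+1}$ determines both the copy index $y = \lfloor x_{n+1}/h_n \rfloor$, hence the rotation $\a_{n,y}$ and the fat column, and the position within that copy, hence the level of~$\iceberg_n$. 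This handles the ``in particular'' clause. The symbolic verification of Definition~\ref{defIcebergApproximation} goes through as you describe; the only caveat is that the paper never makes the phrase ``$\eps$-covered'' precise, so any reasonable reading (Hamming closeness of the coding to a concatenation of rotations of $W_\eps$) will do.
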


\begin{rem}
Recall that $\iceberg_n$-measurable functions are constant 
on levels ${\{B_{-h+1},\ldots,B_0,\ldots,B_{h-1}\}}$ of iceberg~$\iceberg_n$, 
and $\ice_n$-measurable functions are constant on levels $B_j$ 
and in addition it take same value both on $B_{j-h}$ and~$B_j$. 
Simply speaking, different values of~$f$ correspond to different letters on 
fig.\ \ref{fCAT}. 
\end{rem}

\begin{lem}\label{lemIcebergStructure}
Suppose that ${q_n \to \infty}$. 
There exist sub-icebergs $\icebody_n$ for an icegerg transformation $T$ such that 
$T$ lifts any point ${x \in \icebody_n}$ exactly to the upper level, and 
${\mu(\cup\icebody_n \mid \cup\ice_n) \to 1}$ as ${n \to \infty}$. 
In other words, there exist sets ${V^\ib_{k,j} \subseteq V_{k,j}}$ such that   
\begin{equation}
	TV^\ib_{k,j} = V^\ib_{n,k,j+1}, \qquad j = -h+k,\ldots,k-1, 
\end{equation}
and ${\mu(\cup\icebody_n \mid \cup\ice_n) \to 1}$, where 
\begin{equation}
	\cup\icebody_n = \bigcup_{k=1}^{h_n} \:\bigcup_{j=-h+k}^{k-1} V^\ib_{n,k,j}. 
\end{equation}
\end{lem}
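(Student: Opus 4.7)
The plan is to construct $\icebody_n$ cell-by-cell by removing from each fat column $V_{n,k}$ of $\ice_n$ only its single top level $V_{n,k,k-1}$, the unique level at which any cut of the iceberg construction acts inside $V_{n,k}$.

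First I would set
\begin{equation*}
V^\ib_{n,k,j} := V_{n,k,j} \quad (j = -h_n+k,\ldots,k-2), \qquad V^\ib_{n,k,k-1} := \emptyset.
\end{equation*}
The lift identity $TV^\ib_{n,k,j} = V^\ib_{n,k,j+1}$ for $j < k-1$ then follows directly from the definition \eqref{eDefT} of $T$ on the inverse limit: at a regular point $x \in V_{n,k,j}$ the $n$-th coordinate increments cleanly, $x_n \mapsto x_n + 1$, because $x_n$ is not at the cut position $k-1$ of $V_{n,k}$; hence the update stays inside $V_{n,k}$ and moves level $j$ to level $j+1$. The identity at $j = k-1$ is vacuous since $V^\ib_{n,k,k-1} = \emptyset$.

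For the measure estimate, I would use that $\cup\ice_n$ partitions into $h_n$ equimeasurable fat columns, each of which further partitions into $h_n$ equimeasurable levels, so
\begin{equation*}
\mu(\cup\icebody_n \mid \cup\ice_n) = \frac{h_n(h_n - 1)}{h_n^2} = 1 - \frac{1}{h_n}.
\end{equation*}
Since $h_{n+1} = q_n h_n$ and $q_n \to \infty$ (with $q_n \ge 2$), we have $h_n \to \infty$, and the ratio tends to $1$.

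The one subtle point requiring genuine care is the exactness of $TV_{n,k,j} = V_{n,k,j+1}$ for $j < k-1$ in the inverse-limit map, despite the infinitely many additional cuts introduced at subsequent steps $m > n$ of the construction. The key observation is that every step-$m$ cut lies at the top of some step-$m$ thin column, and such a top projects, at the step-$n$ scale, onto the top of some step-$n$ fat column, i.e.\ onto level $k'-1$ of some $V_{n,k'}$. Consequently no cut is introduced inside a cell $V_{n,k,j}$ with $j < k-1$, and the restriction of $T$ to such a cell is a measure-preserving bijection onto $V_{n,k,j+1}$, completing the argument.
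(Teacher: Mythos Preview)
Your construction has a genuine gap in the final paragraph, and it is precisely the ``subtle point'' you flagged. The claim that every step-$m$ cut (for $m>n$) projects, at the step-$n$ scale, onto the top of some fat column $V_{n,k'}$ is \emph{false}. A step-$(n+1)$ cut sits at the last position of a rotated copy $\rho_{\a_{n+1,y}}(W_{n+1})$ inside $W_{n+2}$; under $\phi_{n+1}$ this position is sent to $h_{n+1}-1-\a_{n+1,y}\pmod{h_{n+1}}$, which in general is \emph{not} congruent to $-1\pmod{h_n}$. Concretely, the rotation $\rho_{\a_{n+1,y}}$ typically cuts through the interior of one of the $W_n$-copies making up $W_{n+1}$, so the induced cut lands at an interior level $j<k-1$ of some $V_{n,k}$. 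Consequently $TV_{n,k,j}\ne V_{n,k,j+1}$ on a set of positive measure even for $j<k-1$: your proposed $\icebody_n$ is not $T$-invariant in the required sense. (In the rank-one setting cuts do nest because there is no rotation; that intuition does not survive here.)

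This is exactly why the paper does not argue level-by-level but instead removes whole \emph{thin subcolumns} touched by higher-order cuts. Passing from $W_{n+r}$ to $W_{n+r+1}$ introduces at most $q_{n+r}$ new cut points, each of which can spoil at most one of the existing $W_n$-subcolumns, yielding the recursion $Q^\ib_{n,n+r+1}\ge (Q^\ib_{n,n+r}-1)\,q_{n+r}$ and hence a convergent product for $\mu(\cup\icebody_n\mid\cup\ice_n)$. Your simpler bound $1-1/h_n$ would be nicer, but it is not available: the body one must remove is a union of full-height thin columns, not a single horizontal slice.
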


It can be easily seen from the condition that each set $\icebody_n$ (we call it {\it body\/}) 
can be choosen as the maximal measurable set with the property 
{\it point make no jumps until reaching the top set in a column}. 

\begin{proof}
Consider the thin vertical subcolumns corresponding to the rotated copies of $W_n$ 
in the word $W_{n+r}$. The are totally ${Q_{n,n+r} = q_n q_{n+1}\cdots q_{n+r-1}}$ such columns. 
Let us watch the number $Q^\ib_{n+r}$ of thin subcolumns at level ${n+r}$ 
which is not touched by cut points. 
For example, ${Q_{n,n+1} = Q^\ib_{n,n+1} = q_n}$ and 
\begin{equation}
	Q_{n,n+2} = q_n q_{n+1}, \qquad 
	Q^\ib_{n,n+2} \ge Q_{n,n+2} - q_{n+1}, 
\end{equation}
since passing from level $n+1$ to level $n+2$ the word $W_{n+2}$ is a sequence of rotated copies of~$W_{n+1}$, 
and potentially we can see $q_{n+1}$ cut point touching small words originated from~$W_n$. More precisely, 
passing from level ${n+r}$ to ${n+r+1}$ we can get one additional cutting inside 
one thin subcolumn at level ${n+r}$, i.e.\ 
\begin{equation}
	Q^\ib_{n,n+r+1} \ge Q^\ib_{n,n+r} q_{n+r} - q_{n+r} = (Q^\ib_{n,n+r} - 1) q_{n+r}. 
\end{equation}
Let us denote as $A_{n,n+r}$ the union of all thin subcolumns build at level ${n+r}$, 
\begin{equation}\label{lemIcebergStrEstA}
	\mu(A_{n,n+r+1} \mid \cup\ice_n) = \frac{Q^\ib_{n,n+r+1}}{Q_{n,n+r+1}} = 
	\frac{Q^\ib_{n,n+r+1}/Q^\ib_{n,n+r}}{q_{n+r}} \frac{Q^\ib_{n,n+r}}{Q_{n,n+r}} \ge 
	\left( 1 - \frac1{Q^\ib_{n,n+r}} \right) \, \mu(A_{n,n+r} \mid \cup\ice_n). 
\end{equation}
Since $q_n \to \infty$ we can assume that ${q_{n+r} \ge 3}$, hence, 
${Q_{n,n+r} \to \infty}$ and ${Q_{n,n+r}-1 \ge \frac12 Q_{n,n+r}}$. Therefore 
\begin{equation}
	Q^\ib_{n,n+r+1} \ge Q^\ib_{n,n+r} \cdot \frac{q_{n+r}}2 \ge Q^\ib_{n,n+r} \cdot \frac32 \ge \left( \frac32 \right)^r, 
\end{equation}
and lemma follows from estimate~\eqref{lemIcebergStrEstA}. 
\end{proof}

\begin{rem}
Let us note that existence of body $\icebody_n$ such that 
${\mu(\cup\icebody_n \mid \cup\iceberg_n) \to 1}$ is stronger than 
correctness of the definition of~$T$. 
%
The method used to prove lemma~\ref{lemIcebergStructure} 
is purely combintorial and requires ${q_n \to 0}$. 
At the same time, applying ergodic theorem 
(we only need condition ${\sum_n h_n^{-1} < \infty}$) 
it is not hard to see that this requirement 
can be omitted, but we not put this proof here.  
\end{rem}

Now we turn to estimation of spectral multiplicity of~$T$. 

\begin{defn}\label{defLocalRankPartition}
Given a Rokhlin tower $\tow$ 
let us use the notation $\twp^\eps$ for the measurable partition into levels of the tower~$\tow$ 
and distinct points outside $\cup\tow$. Notice that ${\twp \preceq \twp^\eps}$. 
\end{defn}

\begin{defn}\label{defLocalRank}
{\it Local rank\/} $\beta(T)$ of a measure preserving transformation $T$ 
is the maximal value $\beta$ with the following property: there exists 
a sequence of towers $\tow_n$ such that ${\mu(\cup\tow_n) \to \beta}$ as ${n \to \infty}$ 
and for any measurable set~$A$ there exists $\twp^\eps_n$-measurable sets $A_n$ 
with ${\mu(A_n \syms A) \to 0}$ as ${n \to \infty}$. 
\end{defn}

\begin{figure}[th]
  \centering
  \unitlength=1mm
  \includegraphics{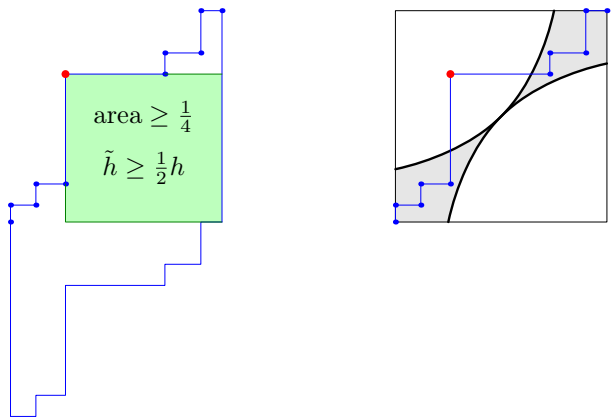} 
  \caption{Proving $1/4$-local rank property. 
  	One vertex on the path visits the white area inside the unit square. 
  	This vertex corresponds to a subtower of the iceberg shown as (green) rectangular area greater than~$1/4$. 
  	On the left picture an iceberg is shown corresponding to the path in the unit square.} 
  \label{fLocalRankProof}
\end{figure}

The following theorem is an immediate geometrical corollary of lemma~\ref{lemIcebergStructure}. 

\begin{thm}\label{thmIcebergLocalRank}
If $T$ is an iceberg transformation with ${q_n \to \infty}$ then $T$ is of $1/4$-local rank. 
\end{thm}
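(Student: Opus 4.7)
The plan is to apply Lemma~\ref{lemIcebergStructure} to restrict attention to the body~$\icebody_n$, on which $T$ lifts cleanly within each fat column, and then to build the required Rokhlin tower via a rectangle-avoidance argument on the cyclic iceberg. At scale~$n$, I would view $\iceberg_n$ as an $h_n\times h_n$ cylinder with coordinates $(k,j)\in\Set{Z}_{h_n}^2$ indexing the fat column~$V_k$ and its level~$j$; by the construction via rotations $\rho_{\a_{n,y}}$, the unique ``cut'' inside each fat column~$V_k$ — the single level at which $T$ fails to lift within the column and instead jumps to another column — occurs at level $k-1$. Hence the set of cut points is the diagonal $D_n=\{(k,k-1):k\in\Set{Z}_{h_n}\}$ in the cylinder.

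The combinatorial core is the observation that for any $w,h'\ge 1$ with $w+h'\le h_n$, one can choose cyclic intervals $I=[k_1,k_1+w-1]$ and $J=[j_1,j_1+h'-1]$ in $\Set{Z}_{h_n}$ with $(I-1)\cap J=\emptyset$, so that the rectangle $I\times J$ avoids~$D_n$. Maximising $wh'$ under the constraint $w+h'\le h_n$ via AM--GM gives $wh'\le\lfloor h_n/2\rfloor\lceil h_n/2\rceil$, so the best rectangle realises area ratio $wh'/h_n^2\to 1/4$. Fix such a rectangle and set $B_n=\bigsqcup_{k\in I}V^\ib_{k,j_1}$; since no cut point lies inside the rectangle, the iterates $T^sB_n=\bigsqcup_{k\in I}V^\ib_{k,j_1+s}$, $s=0,\ldots,h'-1$, lie in distinct column-level pairs and are therefore pairwise disjoint, so $\tow_n=\{T^sB_n\}_{s=0}^{h'-1}$ is a Rokhlin tower of height~$h'$ with $\mu(\cup\tow_n)=wh'\cdot\mu(V^\ib_{k,j_1})\to 1/4$, using $\mu(\cup\iceberg_n)\to 1$ and $\mu(\cup\icebody_n\mid\cup\iceberg_n)\to 1$ from Lemma~\ref{lemIcebergStructure}.

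Finally, to verify the approximation clause of Definition~\ref{defLocalRank}, I would proceed as follows. For a measurable~$A$, Lemma~\ref{lemIcebergApprox} provides an $\iceberg_n$-measurable set $A_n'$ with $\mu(A_n'\syms A)\to 0$; outside $\cup\tow_n$ the ``singleton'' atoms of $\twp^\eps_n$ represent $A_n'$ exactly, while inside $\cup\tow_n$ one approximates $A_n'$ by a union of the $h'$ tower levels, each of which has mass $\sim 1/(2h_n)\to 0$. I expect the main obstacle to be precisely this last step: while the rectangle-avoidance construction is clean and gives mass~$1/4$ on the nose, one must argue carefully that the inside-tower approximation error actually vanishes rather than merely staying bounded by the tower mass, which requires coordinating the tower at scale~$n$ with the refining structure of the iceberg partitions so that $A_n'$ aligns increasingly well with tower levels as $n\to\infty$.
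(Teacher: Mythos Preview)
Your overall strategy matches the paper's: pass to the body $\icebody_n$ via Lemma~\ref{lemIcebergStructure}, then extract a rectangular Rokhlin sub-tower of mass $\gtrsim 1/4$ from the cyclic iceberg by avoiding the diagonal of cut points. However, your execution has one genuine gap and one misplaced concern.

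\textbf{The gap.} Your formula $\mu(\cup\tow_n)=wh'\cdot\mu(V^\ib_{k,j_1})\to 1/4$ tacitly assumes that every fat column has measure $\approx 1/h_n$. This is not granted by the hypothesis $q_n\to\infty$: the measure $\mu(V_{n,k})$ is the proportion of indices $y\in\{0,\ldots,q_n-1\}$ whose rotation $\a_{n,y}$ produces cut position $k$, and no balance on the rotations is assumed (cf.\ Definition~\ref{defUniformIceberg}, which isolates uniformity as a separate notion). The paper's path game in the unit square (fig.~\ref{fLocalRankProof}) is precisely the device that handles arbitrary column-measure profiles: the horizontal coordinate is \emph{cumulative column measure}, not column count, and any monotone staircase from $(0,0)$ to $(1,1)$ has a turn point $(x,y)$ with $\max\{(1-x)y,\,x(1-y)\}\ge 1/4$, which encodes a run of consecutive columns and a run of levels (avoiding their cuts) of the required mass. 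Your combinatorial version can be repaired by one averaging step: with $w=\lfloor h_n/2\rfloor$, the average over cyclic intervals $I$ of length $w$ of $\sum_{k\in I}\mu(V_k)$ equals $w/h_n$, so some $I$ attains this, and then $\mu(\cup\tow_n)\ge (h'/h_n)(w/h_n)\,\mu(\cup\icebody_n)\to 1/4$.

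\textbf{The misplaced concern.} Your worry about the approximation clause is unnecessary, and the paper dispatches it in one line. Each tower level $T^sB_n=\bigsqcup_{k\in I}V^\ib_{k,j_1+s}$ lies entirely inside the single cyclic iceberg level indexed by $j_1+s\pmod{h_n}$. Hence, taking $A'_n$ to be $\ice_n$-measurable (as Lemma~\ref{lemIcebergApprox} allows), the intersection $A'_n\cap\cup\tow_n$ is \emph{exactly} a union of tower levels, so $A'_n$ is already $\twp^\eps_n$-measurable with zero inside-tower error. There is nothing to ``coordinate'' across scales here.
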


\begin{rem}
V.\,Ryzhikov observed \cite{VRyzhLocalRank14} that $\beta(T\times T) = 1/4$ for a typical $T$. 
\end{rem}

\begin{rem}
In the proof we essentially use the fact that $T$ is approximated by {\em cyclic\/} icebergs. 
\end{rem}

\begin{proof}
The idea of the proof is based on the following observation: 
since $\icebody_n$ approximates $\iceberg_n$ then we can use $\icebody_n$ in investigation of dynamcis of~$T$. 
The $1/4$-local rank property can be proved using the following illustration. 
Consider a pair $(x,y)$, where ${0 \le x,y \le 1}$, and a subset of the unit square (see fig.\,\ref{fLocalRankProof}) 
\begin{equation}
	G = \Bigl\{ 
		(x,y) \in [0,1]\times [0,1] \where 
		(1-x)y \ge \frac12 \quad \text{or} \quad x(1-y) \ge \frac12 
	\Bigr\}. 
\end{equation}
Now we play the following geometric game. Starting from $(0,0)$ we have to reach $(1,1)$ 
moving continuously along horizontal or vertical lines (right or up). 
Evidently, one can find a turn point $p$ on the trajectory such that ${p \in G}$. 
Moreover, we can also choose such a point $p$ with the following additional property: 
after the next step it will fall into the upper grayed area. 
This point $p$ describes a subtower $\tow_n$ of $\icebody_n$ such that 
${\mu(\cup\tow_n) \ge \frac14 \mu(\cup\icebody_n)}$. 
The second requirement to the point~$p$ proveides a subtower of height ${\tilde h_n \ge h_n}$. 
Finally, recall that ${\mu(\cup\icebody_n) \to 1}$. 

So, a sequence of Rokhlin subtowers $\tow_n$ is found 
such that levels of $\tow_n$ are included into levels of~$\iceberg_n$. 
We know that for any measurable set~$A$ there exist sets $A_n$ which is a union of levels of~$\iceberg_n$ 
such that ${\mu(A_n \syms A) \to 0}$ as ${n \to \infty}$. Hence, the intersection 
of the set $A_n$ with the tower $\cup\tow_n$ is a union of levels of this tower. 
\end{proof}

\begin{rem}
This corollary remains true for $T$ having property of iceberg approximation. 
Actually the requirement ${\sum_n h_n^{-1} < \infty}$ is needed 
to achive certainly the property ${\beta(T) \ge 1/4}$. 
\end{rem}

\begin{thm}
An automorphism $T$ with the property of iceberg approximation has $1/4$-local rank. 
\end{thm}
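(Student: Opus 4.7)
The plan is to reduce to Theorem~\ref{thmIcebergLocalRank} by extracting from the abstract covering hypothesis a sequence of genuine cyclic icebergs $\iceberg_n$ of heights $h_n \to \infty$ whose total measures tend to $1$ and whose $\sigma$-algebras jointly generate $\cA$ modulo null sets. Once that is in place, the unit-square path argument already carried out in the proof of Theorem~\ref{thmIcebergLocalRank} applies verbatim and yields $\beta(T) \ge 1/4$.

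First I would fix a refining sequence of finite measurable partitions $\cP_1 \preceq \cP_2 \preceq \ldots$ generating $\cA$ mod $\mu$-null sets, which is available because $(X,\cA,\mu)$ is a Lebesgue space. Applying Definition~\ref{defIcebergApproximation} to $\cP_n$ with a summable sequence $\epsilon_n \downarrow 0$ produces words $W_n$ of lengths $h_n$ such that for a $(1-\epsilon_n)$-fraction of points the $\cP_n$-name of the $T$-orbit of length $N(\epsilon_n)$ is $\epsilon_n$-covered by rotations $\rho_\alpha(W_n)$. Passing to a subsequence I may additionally assume $h_n \to \infty$.

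Second, I would convert each covering into a cyclic iceberg. A $\rho_\alpha(W_n)$-occurrence inside an orbit name is an interval of positions of length $h_n$, and the pair (starting position $\bmod\, h_n$, offset inside the occurrence) attaches to almost every point a label $(k,j) \in \Set{Z}_{h_n}\times\Set{Z}_{h_n}$. A greedy selection of a maximal family of pairwise disjoint occurrences still covers a $(1-O(\epsilon_n))$-fraction of $X$, and $T$ advances $(k,j) \mapsto (k,j+1)$ except at the single cut $j=k$; excising the thin boundary pieces exactly as in Lemma~\ref{lemIcebergStructure} produces an honest cyclic iceberg $\iceberg_n$ with $\mu(\cup\iceberg_n) \to 1$ and a body sub-iceberg $\icebody_n$ on which $T$ lifts each level one step upwards. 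By construction $\iceberg_n$ is measurable with respect to the $\cP_n$-name up to an error $O(\epsilon_n)$, so every $A \in \cA$ is approximated in symmetric difference by unions of iceberg levels.

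Third, I would feed $(\icebody_n)$ into the geometric path argument from the proof of Theorem~\ref{thmIcebergLocalRank}: inside each $\icebody_n$ this produces a Rokhlin subtower $\tow_n$ of height at least $h_n$ whose levels are unions of levels of $\iceberg_n$ and whose total measure is at least $\tfrac14\,\mu(\cup\icebody_n) \to \tfrac14$. This is precisely the witness required by Definition~\ref{defLocalRank}. The principal obstacle is the second step, namely the upgrade from an $\epsilon_n$-covering by abstract rotated copies of $W_n$ to a genuinely disjoint measurable iceberg structure: overlapping occurrences in the combinatorial covering must be resolved by the greedy/matching selection, and the cyclicity condition (symmetric forward and backward tracks of length $h_n$ at every base point) has to be ensured by cutting away an $O(\epsilon_n + 1/h_n)$-thin boundary layer. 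Everything following this selection is mechanical and reproduces the proof pattern of Theorem~\ref{thmIcebergLocalRank}.
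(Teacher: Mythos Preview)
Your overall strategy matches what the paper intends: it offers no separate proof of this theorem but simply asserts, in the remark immediately preceding it, that the argument of Theorem~\ref{thmIcebergLocalRank} carries over to the abstract iceberg-approximation setting (with the proviso $\sum_n h_n^{-1}<\infty$), and the remark following the statement concedes that this extension ``is not so evident'' and supplies only the parallelogram heuristic of fig.~\ref{fLocalRankSimpleDemo}. So you are filling in exactly the bridge the paper leaves implicit --- manufacture genuine cyclic icebergs from the symbolic covering, then run the unit-square path argument --- and once the icebergs are in hand, your third step goes through verbatim, since each selected occurrence is already a perfect $T$-column (the body coincides with the iceberg itself, no appeal to Lemma~\ref{lemIcebergStructure} is needed here).

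Two points in step two deserve care. First, in the label $(k,j)$ the first coordinate should be the \emph{rotation parameter} $\alpha$ of the occurrence, not ``starting position $\bmod\,h_n$''; it is $\alpha$ that indexes the fat column, and the iceberg level of the point is then $\alpha+j \bmod h_n$, which is what makes the resulting level partition refine $\cP_n$ up to $O(\epsilon_n)$ and hence approximate $\cA$. Second, your claim that a greedy disjoint selection still covers a $(1-O(\epsilon_n))$--fraction is not automatic if Definition~\ref{defIcebergApproximation} is read as allowing overlapping occurrences: a naive left-to-right greedy pass can lose a constant factor (down to roughly $1/2$), which would only give $\beta(T)\ge 1/8$. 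The intended reading, supported by fig.~\ref{fIcebergAndRank} and the analogy with the symbolic rank-one definition, is that the orbit decomposes into \emph{disjoint} rotated copies together with an $\epsilon$-remainder; under that reading the selection step is trivial and your argument is complete.
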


\begin{figure}[th]
  \centering
  \unitlength=1mm
  \includegraphics{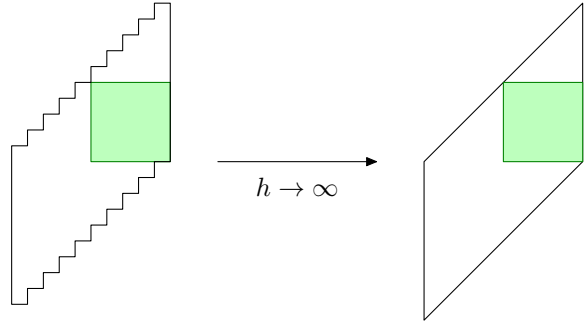} 
  \caption{The area of maximal rectangle fit into the parallelogram is close to~$1/4$. 
  	Green rectangle corresponds to a Rokhlin subtower of the iceberg.} 
  \label{fLocalRankSimpleDemo}
\end{figure}

\begin{rem}
The proof of the above corollary is not so evident. So, to give a simple explanation of the effect 
let us consider an iceberg with large number of fat columns having measure close to $1/h_n$. 
In this case the iceberg looks like parallelogram (see fig.\,\ref{fLocalRankSimpleDemo}) 
which is composed of two triangles with verticies $(0,0)$, $(0,h)$, $(-h,0)$, $(-h,-h)$. 
This illustration will be often used in the sequel. 
\end{rem}

\begin{defn}\label{defUniformIceberg}
Let us call the sequence of icebergs $\iceberg_n$ {\it uniform\/} 
if the vector 
$(\mu(V_{n,1}),\ldots,\mu(V_{n,h_n}))$ is $l^1$-close to the uniform distribution $(h_n^{-1},\ldots,h_n^{-1})$. 
\end{defn}

\begin{exmp}
We say that an iceberg transformation $T$ has {\it Morse property\/} 
if ${h_{n-1} \mid \a_{n,y}}$ 
for any ${y=0,\ldots,q_n-1}$. Remark that Morse property implies 
${\cup\icebody_n = \cup\iceberg_n}$. 
To give an example of Morse iceberg system, suppose that for any~$n$ 
the word $W_n$ is splitted into a~pair of equal blocks: ${W_n = A_n B_n}$, 
${|A_n| = |B_n| = h_{n-1}}$, 
then by definition 
we set
\begin{equation}
	W_{n+1} = A_n B_n B_n A_n. 
\end{equation}
Clearly, starting with $W_0 = 01$ we get the classical Morse sequence. 
Further, if ${W_n = A_n B_n C_n}$, we can consider Morse iceberg map $T_{(3)}$ of order~$3$: 
\begin{equation}
	W_{n+1} = A_n B_n C_n \:.\: B_n C_n A_n \:.\: C_n A_n B_n 
\end{equation}
and so on. Observe that theorem~\ref{thmIcebergLocalRank} can be applied 
to iceberg transformations with Morse property and we see that 
\begin{equation}
	\beta(T_{(r)}) = \left( \frac{r+1}{2r} \right)^2 \qquad \text{if} \quad r=2k+1
\end{equation}
and
\begin{equation}
	\beta(T_{(r)}) = \frac{r(r+2)}{4r^2} \qquad \text{if} \quad r=2k.
\end{equation}
\end{exmp}

\begin{defn}
We say that a measure preserving map $T$ 
has iceberg rank $r$ if there exists a sequence of partitions 
${\cP_n = \ice_n^{(1)} \vee \ldots \vee \ice_n^{(r)}}$ such that 
any measurable set $A$ is approximated by $\cP_n$-measurable sets $A_n$. 
Let us denote $r_I(T)$ the minimal $r$ with this property. 
\end{defn}

\begin{defn}
Let us define $\beta_I(T)$ to be the maximal value $\beta$ such that 
a sequence of icebergs $\ice_n$ approximates $\sigma$-algebra and 
${\mu(\cup\iceberg_n) \to \beta}$ as ${n \to \infty}$. 
\end{defn}

\subsection{Iceberg transformations with spacers}

Let us improve the definition adding spacers between rotated copies of word~$W_n$:
\begin{equation}
	W_{n+1} = 	\rho_{n,\a_{n,0}}(W_n) 1^{s_{n,0}} 
							\rho_{n,\a_{n,1}}(W_n) 1^{s_{n,1}} 
							\ldots
							\rho_{n,\a_{n,q_n-1}}(W_n) 1^{s_{n,q_n-1}}. 
\end{equation}
If we require condition \eqref{eFiniteMeasRankOne} this procedure 
generates a measure preserving transformation on a Lebesgue space. 

\begin{rem}
There is one special case of iceberg map with spacers: 
we do not put spacers between adaject words 
$\rho_{n,\a_{n,0}}(W_n) 1^{s_{n,0}} \rho_{n,\a_{n,1}}(W_n) 1^{s_{n,1}}$ 
but we add only one group of spacers $1^{s_{n,q_n-1}}$ to the tail of~$W_{n+1}$. 
This method helps to provide $h_n$ with special arithmetic properties. 
\end{rem}

\begin{quest}\label{qIcebergAndRank}
Is the following true or not?
\begin{itemize}
	\item [(i)] There exists a map with iceberg approximation but not rank one? 
	\item [(ii)] Any map with iceberg approximation is isomorphic to a map built using 
		iceberg construction with spacers? In other word, is it always possible to refine 
		the sequence of icebergs for a map with iceberg approximation? 
	\item[(iii)] The classes of iceberg maps with and without spacers are not identical?
	\item[(iv)] There exists a map with iceberg approximation which is of infinite rank?
	\item[(v)] There exists a map $T$ with iceberg approximation such that ${\hat T^{j_k} \to \frac12(\Id + \hat T)}$?
	\item[(vi)] What are $r_I(T)$ and $\beta_I(T)$ for a typical $T$?
	\item[(vii)] What is the vaue of ${\beta_I(T \times T)}$ for a typical $T$? 
	\item[(viii)] For any map $T$ with iceberg approximation ${T \times T}$ is of local rank one? 
	\item[(ix)] The entropy $h(T) > 0$ if ${\beta_I(T) > 0}$? 
	\item[(x)] Given a transformation $T$ with iceberg approximation is it true that 
		symbolic complexity of~$T$ is always sub-exponential (see~\cite{Ferenczi2})?
\end{itemize}
\end{quest}


\section{Simplicity of spectrum}\label{sSimpleSp}

We know by theorem~\ref{thmIcebergLocalRank} that iceberg approximation is stronger than 
$1/4$-local rank approximation, therefore, spectral multiplicity of an iceberg transformation 
${\spMult(T) \le 4}$ (see~corollary~\ref{corIcebergSpMult}). 
At the same time, surprisingly, iceberg approximation implies simplicity of spectrum 
for a wide class of transformations. 
We are going to prove this for a class of maps with randomized jumps and for a class of maps 
with jumps given by a pseudo-random substitution on the set of columns. 


\begin{lem}\label{lemSpMultEst} 
Let $U$ be a unitary operator in a separable Hilbert space $H$, 
let $\sigma$ be the maximal spectral type measure, and 
let $\Mult(z)$ be the multiplicity function of the operator~$U$. 
If ${\Mult(z) \ge m}$ on a set of positive $\sigma$-measure than 
one can find $m$ orthogonal elements of unit length $f_1,\dots,f_m$ such that 
for any cyclic subspace ${Z \subset H}$ (with respect to~$U$) and for any 
$m$ elements $g_1,\dots,g_m \in Z$, of equal length ${\|g_i\| \equiv a}$ 
the following is true 
\begin{equation}\label{eSpMultEst} 
    \sum_{i=1}^m \|f_i - g_i\|^2 \ge m (1 + a^2 - 2a/\sqrt{m}). 
\end{equation}
\end{lem}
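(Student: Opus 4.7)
The plan is to use the spectral theorem to produce $m$ orthogonal unit vectors $f_1,\dots,f_m$ with a common spectral measure, and then to bound each inner product $\langle f_i,g_i\rangle$ by projecting onto the cyclic subspace $Z$ and applying Cauchy--Schwarz twice. First I would fix the construction. By the Hellinger--Hahn decomposition, write $H=\bigoplus_{k\ge 1}H_k$ with each $H_k$ a cyclic $U$-invariant subspace of spectral type $\sigma_k$, ordered so that $\sigma_1\gg\sigma_2\gg\cdots$ and $\sigma=\sigma_1$. The hypothesis $\Mult(z)\ge m$ on a set $E$ of positive $\sigma$-measure is equivalent to $\sigma_m(E)>0$; set $\nu := \sigma_m|_E/\sigma_m(E)$, a probability measure satisfying $\nu\ll\sigma_k$ for every $k\le m$. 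Fix cyclic generators $e_k\in H_k$ with $\sigma_{e_k}=\sigma_k$ and put
$$ f_i := \bigl(d\nu/d\sigma_i\bigr)^{1/2}(U)\,e_i, \qquad i=1,\dots,m. $$
The $f_i$ are mutually orthogonal (they lie in the orthogonal summands $H_i$) and each has spectral measure $\nu$, hence $\|f_i\|=1$.

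The heart of the argument is the following projection bound: for every cyclic subspace $Z\subset H$,
$$ \sum_{i=1}^m \|P_Z f_i\|^2 \le 1. \qquad (\star) $$
To establish $(\star)$, let $h$ be a cyclic vector of $Z$, decompose $h=\sum_k h_k$ with $h_k\in H_k$, and note that $\sigma_h=\sum_k \sigma_{h_k}$. Writing $h_i=\eta_i(U)e_i$ one has $d\sigma_{h_i}=|\eta_i|^2\,d\sigma_{e_i}$, and the Radon--Nikodym densities $\rho_i := d\sigma_{h_i}/d\sigma_h$ lie in $[0,1]$ with $\sum_i \rho_i \le 1$ $\sigma_h$-almost everywhere. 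Using the orthogonality relation $\langle f_i, U^k h\rangle = \langle f_i, U^k h_i\rangle$ together with the natural isometries $H_i \cong L^2(\sigma_{e_i})$ and $Z \cong L^2(\sigma_h)$, an explicit computation shows
$$ \|P_Z f_i\|^2 = \int \rho_i\,d\nu, $$
and summing over $i$ gives $\sum_i\|P_Z f_i\|^2 \le \int \bigl(\sum_i \rho_i\bigr) d\nu \le \nu(S^1) = 1$.

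Granted $(\star)$, the rest is routine. Since $g_i\in Z$ and $\|g_i\|=a$,
$$ |\langle f_i, g_i\rangle| = |\langle P_Z f_i, g_i\rangle| \le a\,\|P_Z f_i\|, $$
and two applications of Cauchy--Schwarz (first in the sum over $i$, then via $(\star)$) yield
$$ \sum_{i=1}^m \Re\langle f_i, g_i\rangle \le a\sum_{i=1}^m \|P_Z f_i\| \le a\sqrt{m}\Bigl(\sum_{i=1}^m \|P_Z f_i\|^2\Bigr)^{1/2} \le a\sqrt{m}. $$
Expanding $\sum_i\|f_i-g_i\|^2 = m(1+a^2) - 2\sum_i \Re\langle f_i, g_i\rangle$ and substituting produces precisely $m(1+a^2 - 2a/\sqrt{m})$, which is the desired inequality~\eqref{eSpMultEst}.

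The main obstacle is the projection bound $(\star)$: this is the only place where the multiplicity hypothesis is genuinely used, and the spectral bookkeeping must be done with care (in particular, $\nu$ and $\sigma_h$ may have mutually singular parts, so the densities $\rho_i$ have to be interpreted correctly off the absolutely continuous portions). Everything else is two clean Cauchy--Schwarz steps.
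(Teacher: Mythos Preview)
The paper states Lemma~\ref{lemSpMultEst} without proof, treating it as a known spectral-theoretic fact (it is essentially the standard ``multiplicity versus cyclic approximation'' inequality underlying the local rank bound $\spMult(T)\le 1/\beta$). So there is no paper proof to compare against; I can only evaluate your argument on its own merits.

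Your argument is correct and is in fact the standard route. The construction of the $f_i$ via the Hellinger--Hahn decomposition with a common spectral measure $\nu$ is exactly what is needed, and the key projection bound $(\star)$ is the genuine content of the lemma. Your computation $\|P_Z f_i\|^2=\int\rho_i\,d\nu$ is right: in the direct-integral picture $H=\int^\oplus H(z)\,d\sigma(z)$, the cyclic space $Z$ is fiberwise one-dimensional (spanned by $h(z)$), the $f_i(z)$ are fiberwise orthogonal with common norm $(d\nu/d\sigma)^{1/2}$, and Bessel's inequality in each fiber gives $\sum_i|\langle f_i(z),h(z)\rangle|^2\le (d\nu/d\sigma)\,|h(z)|^2$, from which $(\star)$ follows immediately. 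The issue you flag about the mutually singular parts of $\nu$ and $\sigma_h$ is harmless: on the set where $h(z)=0$ the projection $P_Z f_i$ vanishes fiberwise, so that portion of $\nu$ contributes zero and only strengthens the inequality. The two Cauchy--Schwarz steps at the end are clean and give exactly~\eqref{eSpMultEst}.
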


\begin{cor}
If $T$ is a $\beta$-local rank transformation then ${\spMult(T) \le 1/\beta}$. 
In particular, any rank one transformation have simple spectrum. 
\end{cor}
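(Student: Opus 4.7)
The plan is to argue by contradiction using Lemma~\ref{lemSpMultEst}. Suppose $\spMult(T) > 1/\beta$, so there exists an integer $m$ with $m\beta > 1$ for which $\Mult_T(z) \ge m$ on a set of positive $\sigma$-measure. Lemma~\ref{lemSpMultEst} then furnishes orthogonal unit vectors $f_1,\ldots,f_m \in L^2(X,\mu)$ satisfying inequality~\eqref{eSpMultEst} against every $\hat T$-cyclic subspace and every equal-length tuple inside it. The goal is to exhibit a cyclic subspace and an equal-length tuple that violate~\eqref{eSpMultEst}.

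Let $\tow_n$ be the sequence of Rokhlin towers from Definition~\ref{defLocalRank}, with base $C_n$ and $\mu(\cup\tow_n) \to \beta$, and let $Z_n$ be the closed $\hat T$-cyclic subspace generated by $\mathbf{1}_{C_n}$. Denote by $\phi_i^{(n)} \in Z_n$ the orthogonal projection of $f_i$ onto the finite-dimensional subspace $\mathrm{Span}\{\mathbf{1}_{T^j C_n} \where 0 \le j < h_n\}$. The approximation property on the fine partition $\twp_n^\eps$ from Definition~\ref{defLocalRankPartition} implies $\phi_i^{(n)} \to f_i \mathbf{1}_{\cup\tow_n}$ in $L^2$, whence $\|\phi_i^{(n)}\|^2 = \int_{\cup\tow_n} |f_i|^2\,d\mu + o(1)$ as ${n \to \infty}$.

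I would then test~\eqref{eSpMultEst} with the tuple $g_i^{(n)} = (a/\|\phi_i^{(n)}\|)\, \phi_i^{(n)} \in Z_n$ for an auxiliary scalar ${a > 0}$. A direct expansion gives $\|f_i - g_i^{(n)}\|^2 = 1 + a^2 - 2a\|\phi_i^{(n)}\|$; summing over $i$ and cancelling against the right-hand side of~\eqref{eSpMultEst}, the lemma reduces to the constraint $\sum_{i=1}^m \|\phi_i^{(n)}\| \le \sqrt{m}$ for every $n$, independently of the choice of~$a$. If one could ensure $\|\phi_i^{(n)}\|^2 \to \beta$ simultaneously for every $i$, then $\sum_i \|\phi_i^{(n)}\| \to m\sqrt{\beta} = \sqrt{m \cdot m\beta} > \sqrt{m}$, the promised contradiction.

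The main obstacle is exactly this simultaneous equidistribution $\int_{\cup\tow_n} |f_i|^2 \to \beta$ for all $i = 1,\ldots,m$. A $\beta$-local rank map with $\beta > 0$ is ergodic (a nontrivial invariant set is incompatible with the partition approximation), so by Birkhoff's theorem, for each fixed $n$, the shifted-tower averages $(1/N)\sum_{k=0}^{N-1} \int_{T^k \cup \tow_n} |f_i|^2\, d\mu$ converge to $\mu(\cup\tow_n)$ for every $i$. A pigeonhole step then yields a shift $k_n$ for which the integrals $\int_{\cup T^{k_n}\tow_n}|f_i|^2$ simultaneously approach $\beta$; the shifted towers $T^{k_n}\tow_n$ still witness the same local rank property, so the argument goes through. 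For the in-particular statement (rank one implies simple spectrum) this shifting is unnecessary: when $\beta = 1$, the condition $\mu(\cup\tow_n) \to 1$ forces $\|f_i \mathbf{1}_{\cup\tow_n}\|^2 \to 1$ automatically, and the contradiction $m > \sqrt m$ surfaces as soon as $m \ge 2$.
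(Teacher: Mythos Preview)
The paper states this corollary without proof, treating it as a standard consequence of Lemma~\ref{lemSpMultEst}; so there is no explicit argument to compare against. Your overall architecture---assume $m\beta>1$, invoke the lemma, project the $f_i$ onto the span of tower levels inside the cyclic space of $\mathbf 1_{C_n}$, rescale to a common length~$a$, and reduce everything to the scalar inequality $\sum_i\|\phi_i^{(n)}\|\le\sqrt m$---is exactly the intended route, and your reduction to needing $\|\phi_i^{(n)}\|^2\to\beta$ for every~$i$ is correct.

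The genuine gap is the ``pigeonhole step.'' From the ergodic averages
\[
\frac1N\sum_{k=0}^{N-1}\int_{T^k\cup\tow_n}|f_i|^2\,d\mu\longrightarrow\mu(\cup\tow_n)
\]
you cannot conclude that some single $k_n$ makes all $m$ integrals simultaneously close to~$\beta$. Knowing that each coordinate of a sequence of vectors in $[0,1]^m$ has Ces\`aro mean near $(\beta,\dots,\beta)$ does \emph{not} force any single vector to lie near that point; and the weaker conclusion one does get (some $k$ with $\sum_i a_k^{(i)}\ge m\beta$) is insufficient, since minimizing $\sum_i\sqrt{a_k^{(i)}}$ under $\sum_i a_k^{(i)}=m\beta$ and $a_k^{(i)}\le 1$ gives only $\lfloor m\beta\rfloor+\sqrt{m\beta-\lfloor m\beta\rfloor}$, which need not exceed~$\sqrt m$ (take e.g.\ $m=4$, $\beta=0.3$). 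Your observation that shifted towers still witness the same local-rank property is correct, but it does not rescue the argument. Note also that replacing each $f_i$ by $T^{j_i}f_i$ with individually chosen shifts $j_i$ would solve the equidistribution problem, but it destroys the orthogonality that Lemma~\ref{lemSpMultEst} requires. A complete proof needs a different device---either exploiting the freedom in how the lemma manufactures the $f_i$, or a sharper inequality that controls $\sum_i\|\phi_i^{(n)}\|$ directly from orthonormality of the $f_i$ and the tower approximation. Your rank-one special case ($\beta=1$) is fine as stated.
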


\begin{cor}\label{corIcebergSpMult}
Since any transformation $T$ with iceberg approximation 
has $1/4$-local rank property, ${\spMult(T) \le 4}$. 
Further, for any map $T$ with $\beta$-icerberg approximation property, ${\spMult(T) \le [4/\beta]}$. 
\end{cor}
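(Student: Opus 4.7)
The plan is to obtain the corollary as a direct composition of the two preceding results: Theorem~\ref{thmIcebergLocalRank} (iceberg approximation implies $1/4$-local rank) and the corollary immediately above (which, via Lemma~\ref{lemSpMultEst}, bounds $\spMult(T)$ by $1/\beta$ for any $\beta$-local rank map). Since $1/4$-local rank gives $\spMult(T) \le 1/(1/4) = 4$, the first sentence of the statement requires no additional argument.

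For the second sentence, I would reinspect the proof of Theorem~\ref{thmIcebergLocalRank} and track how the constant $1/4$ depends on the covering measure of the approximating iceberg. In that proof, the geometric game in the unit square produces a Rokhlin subtower $\tow_n$ whose union has measure at least $\tfrac{1}{4}\mu(\cup\icebody_n)$, and lemma~\ref{lemIcebergStructure} is then used to replace $\mu(\cup\icebody_n)$ by $1$ in the limit. Under $\beta$-iceberg approximation the analogue of lemma~\ref{lemIcebergStructure} only gives $\mu(\cup\icebody_n) \to \beta$, so the same game produces subtowers with ${\mu(\cup\tow_n) \to \beta/4}$, i.e.\ $T$ has $(\beta/4)$-local rank. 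Applying the corollary on local rank then yields $\spMult(T) \le 4/\beta$, and integer-valuedness of the multiplicity supplies the $[\,\cdot\,]$.

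The main obstacle, such as it is, is purely bookkeeping: verifying that the geometric argument of Theorem~\ref{thmIcebergLocalRank} is genuinely \emph{proportional} in $\beta$, i.e.\ that the rectangle extracted from the parallelogram-shaped iceberg scales linearly with the total iceberg measure, and that the approximation step (replacing $\iceberg_n$-measurable sets by $\twp^\eps_n$-measurable ones in the sense of def.~\ref{defLocalRank}) still works when the complement of $\cup\iceberg_n$ has measure $1-\beta$ rather than tending to $0$. Both points are routine: the former because the construction of the inscribed subtower uses only the shape of a single fat column and not the total mass, the latter because definition~\ref{defLocalRankPartition} already allows the complement of the tower to be split into singletons.

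No genuinely new idea is needed beyond the already-established Lemma~\ref{lemSpMultEst} machinery, so this corollary is really a packaging statement that records the strongest spectral multiplicity bound available from the local rank estimate.
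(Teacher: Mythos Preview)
Your proposal is correct and matches the paper's approach: the corollary is stated without proof, as an immediate consequence of combining Theorem~\ref{thmIcebergLocalRank} with the preceding corollary on $\beta$-local rank (itself a consequence of Lemma~\ref{lemSpMultEst}). Your observation that the geometric subtower construction scales proportionally in $\mu(\cup\icebody_n)$, yielding $(\beta/4)$-local rank from $\beta$-iceberg approximation, is exactly the content the paper leaves implicit in the second sentence.
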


The discussion throughout this section concerns simplicity of spectrum for iceberg transformation. 

\subsection{Preliminary calculations} 


Suppose that the maximal spectral multiplicity ${m(T) \ge 2}$. 
So, applying lemma~\ref{lemSpMultEst} we see that two functions $f_1$ and $f_2$ should exist 
suth that any cyclic subspace $Z$ contains elements $g_1, g_2$ with the propertiy ${\|g_1\| = \|g_2\| = a}$ 
and satisfying 
\begin{equation}
    \|f_1 - g_1\|^2 + \|f_2 - g_2\|^2 \ge 2(1 + a^2 - 2a/\sqrt{2}).
\end{equation}
We will show that for a class of transformations 
there exist a cyclic subspace approximating both $f_1$ and $f_2$, and the contradiction will follow. 
Functions $f_i$ can be approximatied by $\ice_{n_0}$-measurable functions, and without loss of generality 
we can assume that $f_1$ are $f_2$ are $\ice_{n_0}$-measurable. Since the arguments concerning 
approximation precision are the same both for $f_1$ and~$f_2$, we will work wiht one function $f_1$. 
Recall that $f_1$ is $\ice_n$-measurable for any ${n \ge n_0}$. Let us denote $f_{(n)}$ the lifting 
of the function $f_1$ to the iceberg $\ice_n$. 
Let ${b_n = {\bf 1}_{B_{n,0}}}$ be the indicator of the base set $B_{n,0}$ of the iceberg~$\ice_n$. 
We~have 
\begin{equation}\label{capprf}
    f = \sum_{j=0}^{h_n-1} f_{(n)}(j)\, S^j b_n 
    	= \sum_{j=-(h_n-1)/2}^{(h_n-1)/2} f_{(n)}(j)\, S^j b_n, \qquad 
    n \ge n_0, 
\end{equation}
where by definition ${S B_j = B_{j+1}}$ is the operator in $L^2(X_n)$ corresponding to the rotation ${t \mapsto t+1}$.

\begin{figure}[th]
  \centering
  \unitlength=1mm
  \includegraphics{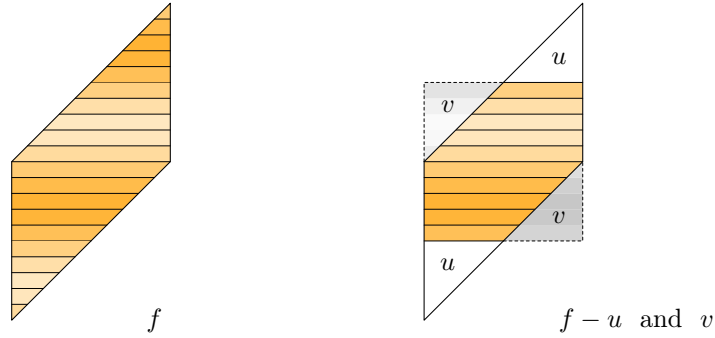} 
  \caption{Proving simplicity of spectrum} 
  \label{fSimpleSpProof}
\end{figure}

{\it Idea}. The following way of proving simplity of spectrum one can call ``$3/4$-strategy''. 
We approximate function $f$ by the iterations $T^j b_n$ of the indicatior of the base level $B_{n,0}$, 
where ${j=-(h_n-1)/2,\ldots,(h_n-1)/2}$, 
on the subset of the phase space $X$ of measure $\approx 3/4$ (see fig.\,\ref{fSimpleSpProof}). 
Assume for simplicity that ${h_n = 2\Set{Z}+1}$. 
Remark that the estimates below used by this approach is 
not a~priory necessary for simplicity. 

Without loss of generality we can assume that ${\int f \,d\mu = 0}$. 
Consider sets $G_n$ and $E_n$ of measure $\approx 3/4$ and $\approx 1/4$ respectively, where 
$G_n$ corresponds to the central area on figure~\ref{fSimpleSpProof} and $E_n$ is the union 
of two triangular areas remote from the base level, 
\begin{equation}
	\cup\iceberg_n = G_n \cup E_n, \qquad 
	G_n = \bigcup_{j = -(h_n-1)/2}^{(h_n-1)/2} B_{n,j}, \qquad 
	E_n = \bigcup_{j = -h_n+1}^{-(h_n-1)/2-1} B_{n,j} \cup 
				\bigcup_{j = (h_n-1)/2+1}^{h_n-1} B_{n,j}
\end{equation}
The function $f$ is approximated by the function 
\begin{equation}
	g = \sum_{j=-(h_n-1)/2}^{(h_n-1)/2} f_{(n)}(j)\, T^j b_n
\end{equation}
and $g$ can be represented in the following way: 
\begin{equation}
	g = f-u + v, \qquad f-u = f|_{G_n}, \quad u = f_{E_n}, 
\end{equation}
and $v$ is uniquely definied from the equation. The meaning of the function $v$ can be explained 
as follows. Walking from the base of the iceberg under action of~$T$ a point $x$ is moving 
in the vertical direction, and the value $f(T^jx)$ is recovered by the index of level, 
namely, ${f(T^jx) = f|_{B_j}}$. Though, when $x$ approaches to the top set of a fat column 
it makes jump to the bottom set of some column and continue vertical motion in that column. 
So, the function $v$ collects the parts of the iterates $T^j b_n$ leaving the set~$G_n$. 
More precisely, we can write 
\begin{equation}
	T^j b_n = {\bf 1}_{B_{n,j}} + \xi_j, \qquad v = 
		\sum_{j=-(h_n-1)/2}^{(h_n-1)/2} f_{(n)}(j)\, \xi_j, 
\end{equation}
and
\begin{equation}
	u = \sum_{j=-h_n+1}^{-(h_n-1)/2-1} f_{(n)}(j)\, {\bf 1}_{B_j} + 
			\sum_{j=(h_n-1)/2+1}^{h_n-1} f_{(n)}(j)\, {\bf 1}_{B_j}. 
\end{equation}
If possible we omit index $n$ for simplicity.

\begin{lem}\label{commonLemSimplicity}
Suppose that ${\scpr<u,v> \to 0}$ and ${\scpr<f,v> \to 0}$ as ${n \to \infty}$ then 
asymptotically ${ \|f-g\|^2 \to 1/2 }$ and ${ \|g\| = (1+o(1))\|f\| }$. 
In particular, $T$ has simple spectrum. 
\end{lem}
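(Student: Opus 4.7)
The plan is to establish the two asymptotic identities $\|f-g\|^2 \to \frac12$ and $\|g\|\to\|f\|$ by Hilbert-space manipulations using the two hypotheses, and then derive simplicity of spectrum by contradiction via Lemma~\ref{lemSpMultEst}. First I would exploit the decomposition $f-g = u - v$: expanding the norm and invoking the first hypothesis $\scpr<u,v>\to 0$ gives $\|f-g\|^2 = \|u\|^2 + \|v\|^2 + o(1)$. Next, from $g = (f-u) + v$, the identity $\scpr<f,u> = \|u\|^2$ (which holds because $u = f\cdot\mathbf{1}_{E_n}$), and the second hypothesis $\scpr<f,v>\to 0$, one obtains $\|g\|^2 = \|f\|^2 - \|u\|^2 + \|v\|^2 + o(1)$.

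The crucial step is to show that $\|u\|^2$ and $\|v\|^2$ both tend to $\frac14\|f\|^2$. For this I would combine Lemma~\ref{lemIcebergStructure} (the body $\icebody_n$ exhausts the iceberg) with the disjointness of the levels $B_{n,j}$ to argue that the iterates $T^j b_n$ for $|j|\le(h_n-1)/2$ are asymptotically orthogonal with $\|T^j b_n\|^2 \to 1/h_n$. This yields $\|g\|^2 \to \sum_{|j|\le(h_n-1)/2}|f_{(n)}(j)|^2/h_n$, and the $h_n$-periodicity of $f_{(n)}$ (inherited from $\ice_n$-measurability of $f$) makes this sum run over a full period of residues, so it equals $\|f\|^2$. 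Comparing with the above formula for $\|g\|^2$ then forces $\|u\|^2 = \|v\|^2 + o(1)$. Computing $\|u\|^2$ directly from the explicit weights $\mu(B_{n,j}) = (h_n-|j|)/h_n^2$ on the two triangular regions making up $E_n$ gives $\|u\|^2 \to \frac14\|f\|^2$ whenever the coefficients $|f_{(n)}(j)|^2$ are sufficiently equidistributed across residues modulo $h_n$, which holds in the random-rotation setting of Theorem~\ref{thmRandomIcebergMap}.

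Normalizing $\|f\|=1$ delivers $\|f-g\|^2\to\frac12$ and $\|g\|\to 1$. For simple spectrum, suppose for contradiction that $\spMult(T)\ge 2$. Lemma~\ref{lemSpMultEst} with $m=2$ supplies orthonormal $f_1,f_2$ such that for every cyclic subspace $Z$ and every pair $g_1,g_2\in Z$ with $\|g_1\|=\|g_2\|=a$ one has $\|f_1-g_1\|^2 + \|f_2-g_2\|^2 \ge 2(1+a^2-a\sqrt{2})$. Applying the above construction to each $f_i$ produces $g_i$ in the common cyclic subspace generated by $b_n$ with $\|g_i\|\to 1$ and $\|f_i-g_i\|^2\to\frac12$, so the left-hand side tends to $1$ while the lower bound tends to $2(2-\sqrt{2})\approx 1.17$, a contradiction.

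The main obstacle is the asymptotic orthogonality of $\{T^j b_n\}_{|j|\le(h_n-1)/2}$ together with the equidistribution of $|f_{(n)}(j)|^2$ across residue classes; both demand careful control of the jump components $\xi_j$ that constitute $v$. Each cross inner product $\scpr<T^j b_n,T^k b_n>$ for $j\ne k$ picks up at most $O(|j-k|/h_n^3)$ from jumped mass landing back on $B_{n,0}$, and the accumulated error across the $\sim h_n^2$ pairs has to be controlled via the random structure of the rotations $\a_{n,y}$.
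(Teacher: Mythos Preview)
Your overall architecture---expand $\|g\|^2$ and $\|f-g\|^2$ via $g=f-u+v$, use $\langle f,u\rangle=\|u\|^2$, invoke the two hypotheses, then derive a contradiction from Lemma~\ref{lemSpMultEst} with $m=2$---matches the paper exactly, and your endgame inequality $1 < 2(2-\sqrt{2})$ is the same contradiction the paper reaches.

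Where you diverge is in establishing $\|u\|=\|v\|$. You go the long way round: first prove $\|g\|^2\to\|f\|^2$ independently via asymptotic orthogonality of $\{T^jb_n\}_{|j|\le(h_n-1)/2}$, then compare with the expansion $\|g\|^2=\|f\|^2-\|u\|^2+\|v\|^2+o(1)$ to deduce $\|u\|^2=\|v\|^2$. The paper instead observes directly that $v(x)=u(\Phi x)$ for some measure-preserving invertible map $\Phi$: the mass that jumps out of the body while forming $g$ carries precisely the $f$-values that would have populated the triangular region $E_n$, and the jump dynamics transports one onto the other isometrically. This single structural remark gives $\|v\|=\|u\|$ for free, and then $\|g\|^2\approx\|f\|^2+2\|u\|^2-2\langle f,u\rangle=\|f\|^2$ follows from the expansion with no further work.

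Your route is not wrong, but the orthogonality of the iterates $T^jb_n$ is exactly the delicate point you flag as ``the main obstacle,'' and it is entirely avoidable. Moreover, by going through orthogonality you are led to invoke the random-rotation setting to control cross terms, whereas the lemma as stated (and the paper's proof) is deterministic: once the hypotheses $\langle u,v\rangle\to0$ and $\langle f,v\rangle\to0$ are granted, the conclusion uses only the iceberg geometry and the identity $v=u\circ\Phi$. So the paper's argument is both shorter and more general than yours; look for the measure-preserving correspondence between $\operatorname{supp} u$ and $\operatorname{supp} v$ and you can drop the entire orthogonality discussion.
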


\begin{proof}
Assume that $f$ has zero mean and ${\|f\| = 1}$. 
Let us notice that $v(x) = u(\Phi x)$ for some measure preserving invertible map~$\Phi$, 
hence, ${\|v\|^2 = \|u\|^2}$. Further, we have 
\begin{multline}
    a^2 = \|g\|^2 = \|f - u + v\|^2 = 
    \|f\|^2 + \|u\|^2 + \|v\|^2 - 2\Re\scpr<f,u> + 2\Re\scpr<f,v> - 2\Re\scpr<u,v> \approx \\
    \approx
    \|f\|^2 + 2\|u\|^2 - 2\Re\scpr<f,u> = 
    \|f\|^2 = 1,
\end{multline}
and
\begin{equation}
	\|u\|^2 \approx \frac14 \|f\|^2, \qquad \scpr<f,u> = \|u\|^2. 
\end{equation}
Now let us extimate $\|f - g\|$: 
\begin{equation}
    \|f-g\|^2 = \|-u+v\|^2 \approx \|u\|^2 + \|v\|^2 \approx \frac12 \|f\|^2 = \frac12.
\end{equation}
To establish the second statement of the lemma let us divide over $m$ 
left and right side of inequality~\ref{eSpMultEst} in lemma~\ref{lemSpMultEst}, 
and take into accont that we use the same estimates both for $f_1$ and~$f_2$. 
Thus we have to analyze the following inequality: 
\begin{equation}
    \|f-g\|^2 \ge 1 + a^2 - a\sqrt{2}. 
\end{equation}
Using results of the above calculations we have 
\begin{gather}
    \|f-g\|^2 \ge 1 + a^2 - a\sqrt{2}, \\
    \frac12 \ge 2 - \sqrt{2}, \\
    \sqrt{2} \ge \frac32 \label{eqGap}, 
\end{gather}
and we come to contradiction. 
\end{proof}

\begin{defn}
If an automorphism $T$ is approximated by a sequence of icebergs $\ice_n$ 
and ${\mu(\cup\iceberg_n) \to \beta}$ as ${n \to \infty}$ then 
we say that $T$ has $\beta$-local iceberg rank. 
\end{defn}

\begin{rem}
The method of lemma~\ref{commonLemSimplicity} works for $T$ if 
\begin{equation}
	\beta_I(T) > 8/9. 
\end{equation}
The reason is a~small gap in equation~\eqref{eqGap}. 
\end{rem}

\begin{proof}
Suppose that $\beta_I(T) \ge \beta$ and consider a sequence of icebergs 
with approximation property and ${\mu(\cup\iceberg_n) \to \beta}$. 
Let us define $g$ in the same way as in the proof of lemma \ref{commonLemSimplicity}, 
and assume that ${\|f\| = 1}$. Asymptotically we have 
\begin{equation}
	\|f-g\|^2 \le \|u\|^2 + \|v\|^2 + (1-\beta)\|f\|^2 \approx \left( \frac12\beta + 1-\beta \right) \|f\|^2, 
\end{equation}
since we have no information about $f$ outside $\cup\iceberg_n$ except 
${\|f|_{\cup\iceberg_n}\|^2 \to \beta \|f\|^2}$ (in force of ergodic theorem). 
Further, ${a^2 = \|g\|^2 \approx \beta\|f\|^2}$, and the following two values must be compared 
\begin{equation}
	1 - \frac12\beta \quad \text{and} \quad 
	1 + a^2 - a \sqrt2 \approx 1 + \beta - \sqrt{2\beta}. 
\end{equation}
Solving equation 
\begin{equation}
	1 - \frac12\beta = 1 + \beta - \sqrt{2\beta} 
\end{equation}
we find the critical value of $\beta = 8/9$. 
\end{proof}

\begin{quest}
Is it possible to find $T$ with non-simple spectrum and ${\beta_I(T) \le 8/9}$\,?
\end{quest}

\if0=1{
\subsection{Linearly connected icebergs and jump map} 

Our next goal is to apply the common lemma~\ref{commonLemSimplicity} to 
specific classes of iceberg transformations, and we start with the case 
when number of rotated copies $q_n$ is comparable to the length of the previous word~$h_n$. 

\begin{defn}
We say that iceberg $\iceberg_n$ in a sequence that defines an iceberg map~$T$ is {\it linearly connected\/} 
if the Poincar\'ee map sends the top set of any fat column entirely to some bottom set of other fat column. 
In combinatorial language this means that any rotatin value $a_{n,y}$ occurs only once, 
${a_{n,y_1} = a_{n,y_2} \ \Rightarrow \ y_1 = y_2}$, 
in particular, ${q_n \le h_n}$. 
\end{defn}

Observe that if $\iceberg_n$ is linearly connected then the set of pairs $(a_{n,y},a_{n,y+1})$ 
is a graph, since the first coordinate is never repeated. The first coordinate in the pair $(a_{n,y},a_{n,y+1})$ 
is the index of a fat column and the second is number of the target subcolumn after jump 
under the Poincar\'ee map. 

\begin{defn}
In case of a linearly connected iceberg $\iceberg_n$ the Poincar\'ee map can be threated as 
a map from some domain in~$\Set{Z}_h$ to~$\Set{Z}_h$. We call this new map {\it jump map} 
${J_n \Maps \cD_n \to \Set{Z}_h}$. 
\end{defn}

Let us iterate the base set $B_{n,0}$ of~$\iceberg_n$ assuming that only points inside $\icebody_n$ are considered. 
After $j$ steps ${T^j B_{n,0} = {\bf 1}_{B_j} + \xi_j}$, where ${\supp \xi_j = \bigcup_{k=1}^j C_k(j)}$ 
is the union of sets sent by the Poincar\'ee map to the bottom of the iceberg, 
one set at one iteration by~$T$. 
Evidently, the set $C_k(j)$ comes to the level ${J(k)+j-k}$, where ${j-k}$ is the number of 
additional step up after the jump to the bottom of fat column with index~$J(k)$. 

\begin{lem}
Suppose that all icebergs in the sequence $\iceberg_n$ are linearly connected, uniform, 
and the following condition holds: 
\begin{equation}\label{eLinCSimpleSpCond}
	\max_j 
	\left\| 
		\sum_{k=1}^{j} \df_{\phi_{n-1}(J(k)+j-k)} 
	\right\|_{C^\infty(\Set{Z}_{h_{n-1}})} 
	\to 0, 
	\qquad n \to \infty. 
\end{equation}
Then ${\scpr<u,v> \to 0}$ and ${\scpr<f,v> \to 0}$ as ${n \to \infty}$ 
(in terms of lemma~\ref{commonLemSimplicity}) and the transformation $T$ has simple spectrum. 
\end{lem}

\begin{proof}
The idea of condition~\eqref{eLinCSimpleSpCond} is rather trivial. We look at the image 
$J([1,j])$ of the segment $[1,j]$, and we are interested how it is distributed in $\Set{Z}_{h_n}$. 
Since ...
\end{proof}
}\fi

\subsection{Simplicity of spectrum for randomized iceberg transformations}

\begin{figure}[th]
  \centering
  \unitlength=1mm
  \includegraphics{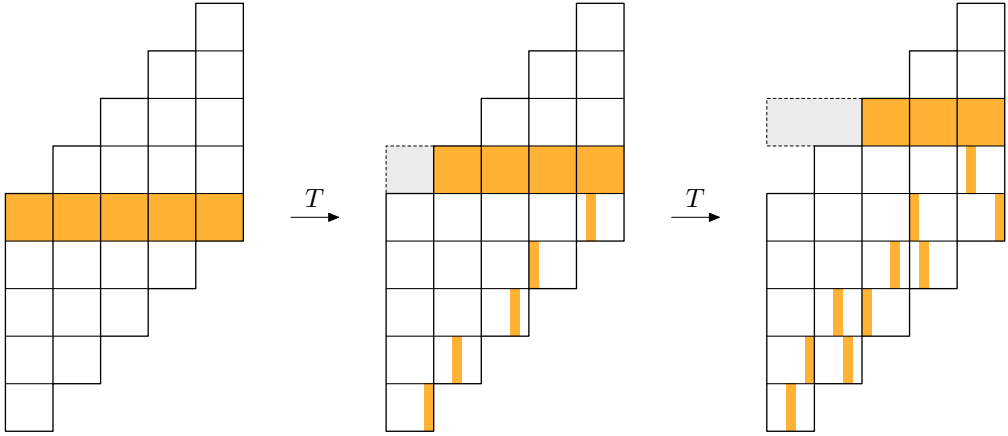} 
  \caption{Iterates of the base level of an iceberg.} 
  \label{fRandIcbLevelDyn}
\end{figure}

\begin{lem}\label{lemSimpleSpRandom}
Suppose that 
\begin{equation}\label{eqJumpsRandom}
	\left\| 
		\mu( TV_{n,k,k-1} \mid V_{n,s,s-h}) - \frac1{h_n} 
	\right\|_1 
	= o\left( \frac1{h_n} \right). 
\end{equation}
Then $\scpr<u,v> \to 0$ and $\scpr<f,v> \to 0$ as $n \to \infty$. 
\end{lem}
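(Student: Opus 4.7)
The strategy is to exploit the randomization of jumps from~\eqref{eqJumpsRandom} to show that each $\xi_j$ is, on average, uniformly spread across the iceberg, so that its pairing with the zero-mean function $f$ (and with its tail $u=f\mathbf{1}_{E_n}$) cancels. First I would unpack the structure of $\xi_j=T^j b_n-\mathbf{1}_{B_{n,j}}$. Because the body $\icebody_n$ preserves the full base level of every fat column $V_{n,k}$ with $k>j$, the image $T^j b_n$ already covers all of $B_{n,j}$ during the first $h_n$ iterations, so the ``missing part'' $\xi_j^-$ vanishes and $\xi_j=\xi_j^+\ge 0$. This positive part decomposes as $\xi_j^+=\sum_{k=1}^{j}\eta_{k,j}$, where $\eta_{k,j}$ is the sub-indicator of mass $h_n^{-2}$ that left the top of column $V_{n,k}$ at time~$k$ and now sits on level $-h_n+S_k+(j-k)$ of column $V_{n,S_k}$. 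By \eqref{eqJumpsRandom} the destination $S_k$ is uniform on $\{1,\dots,h_n\}$ up to an $L^1$-error $o(h_n^{-1})$.

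Second, substituting the near-uniform distribution of $S_k$ into $\langle f,v\rangle=\sum_j f_{(n)}(j)\sum_{k=1}^{j}\int f\,d\eta_{k,j}$ and reindexing by $r=j-k$, the main term becomes
\begin{equation*}
    \frac{1}{h_n}\sum_{j=1}^{(h_n-1)/2}f_{(n)}(j)\,I_j,\qquad I_j=\sum_{r=0}^{j-1}\int_{V_{n,r+1}}f\,d\mu,
\end{equation*}
using the identity $\sum_{s=1}^{h_n}f_{(n)}(-h_n+s+r)=h_n^2\int_{V_{n,r+1}}f\,d\mu$. The global sum $I_{h_n}=\int f\,d\mu$ vanishes by zero-mean, and for $j$ in the summation range $I_j$ is a partial sum of column integrals. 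A summation by parts in~$j$ converts the expression into a combination of the increments $f_{(n)}(j)-f_{(n)}(j-1)$ paired against $I_j$, and I would control the resulting bilinear form by Cauchy--Schwarz together with an $\ell^2$-estimate of the boxcar convolution $(I_j)_j$. The residual error from the non-uniform part of~\eqref{eqJumpsRandom}, aggregated over the $\sim h_n^2/4$ jump events of mass $h_n^{-2}$ and amplitude $\|f\|_\infty$, contributes $o(\|f\|_\infty^2)$. The inner product $\langle u,v\rangle$ follows by exactly the same argument with $f_{(n)}(j)$ replaced by the tail sequence $u_{(n)}(j)=f_{(n)}(j)\mathbf{1}_{|j|>(h_n-1)/2}$.

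The main obstacle is to make this partial-sum cancellation quantitatively effective. The column integrals $\int_{V_{n,r+1}}f\,d\mu$ form a boxcar convolution of $f_{(n)}$ of width~$h_n$, and controlling their $\ell^2$-norm --- in the absence of an extra uniformity hypothesis on the iceberg, cf.~Def.~\ref{defUniformIceberg} --- requires either a density argument approximating $f$ by functions smooth in the scale of the iceberg followed by a Parseval estimate, or a direct second-moment computation that leverages~\eqref{eqJumpsRandom} at the level of pairs of jumps. Matching this estimate against the error budget from the $o(1/h_n)$ deviation per jump, so that the aggregate of $O(h_n^2)$ elementary errors remains $o(1)$, is the technical heart of the proof.
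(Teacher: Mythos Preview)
Your decomposition $\xi_j=\sum_{k\le j}\eta_{k,j}$ and the averaging over the uniform jump destination are exactly what the paper does, but you miss the one-line observation that finishes the computation. By your own identity, $\sum_{s=1}^{h_n}f_{(n)}(-h_n+s+r)=h_n^2\int_{V_{n,r+1}}f\,d\mu$, and the right-hand side is \emph{identically zero} for every~$r$: each fat column of a cyclic iceberg contains exactly one cell from every element of the partition~$\ice_n$, all of equal measure, so $\int_{V_{n,r+1}}f\,d\mu$ is a constant multiple of $\sum_{t\in\Set{Z}_{h_n}}f_{(n)}(t)=0$ by zero mean of~$f$. Hence every $I_j=0$ individually, the ``main term'' vanishes termwise, and there is no partial-sum cancellation to establish, no summation by parts, no Cauchy--Schwarz, no boxcar estimate. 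The ``main obstacle'' you identify simply is not there.

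This is precisely how the paper argues. After replacing hypothesis~\eqref{eqJumpsRandom} by the exact equality $\mu(TV_{n,k,k-1}\mid V_{n,s,s-h})\equiv h_n^{-1}$ (a density reduction) and restricting to the body~$\icebody_n$, the paper simply notes that the contribution of each jump piece to $\scpr<v,f>$ is proportional to $\sum_{s=1}^{h_n}f_{(n)}(s-h+j)$, which vanishes as a full cyclic sum; it adds that since $f$ may be taken $\ice_{n-1}$-measurable, this sum also decomposes into $h_{n-1}$-blocks each of zero sum. Your handling of the $o(1/h_n)$ deviation is correct and matches the paper's implicit approximation step.
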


This lemma immediately implies that $T$ has simple spectrum (see lemma~\ref{commonLemSimplicity}). 

\begin{proof}
It is not hard to see that without loss of generality 
we can assume the following condition: 
${\mu( TV_{n,k,k-1} | V_{n,s,s-h}) - h_n^{-1} \equiv 0}$. 
Since ${q_n \to \infty}$ we can restrict $f$ to the body~$\icebody_n$, 
where ${\mu(\cup\icebody_n \mid \cup\iceberg_n) \to 1}$. 
Suppose that $f$ is $\ice_{n-1}$-measurable. 
Then for any top set in a fat column $V_{n,k,k-1}$ we see that 
$T^jV_{n,k,k-1}$ intersects the sequence of sets $V_{n,s,s-h+j}$, and 
\begin{equation} 
	\mu(T^jV_{n,k,k-1} \cap V_{n,s,s-h+j}) = \frac1{h_n^2}. 
\end{equation}
Hence, the function $v$ takes value $f_{(n)}(j)$ on each set $V_{n,s,s-h+j}$. 
At~the~same time 
\begin{equation} 
	f|_{V_{n,s,s-h+j}} \equiv f_{(n)}(s-h+j), 
\end{equation}
and 
\begin{equation} 
	\scpr<v|_{T^jV_{n,k,k-1}},f> = 
	\sum_{s=1}^{h_n} f_{(n)}(j) \, f_{(n)}(s-h+j) \cdot \frac1{h_n^2} =   
	O\left( h_{n-1} \cdot \frac1{h_n^2} \right) = 0. 
\end{equation}
Here we use the property ${\sum_{\kappa=0}^{h_{n-1}-1} f_{(n-1)}(\kappa) = 0}$ 
which is invariant under rotation map ${t \mapsto t+1}$, hence, the sequence 
\begin{equation} 
	(f_{(n)}(s-h+j))_{s=1}^{h_n-1} = (f_{(n)}(1-h+j),\ldots,f_{(n)}(j))
\end{equation}
is split into blocks of length $h_{n-1}$ such that 
the sum of the elements in each group equals zero. 
\end{proof}

\begin{thm}
Iceberg transformations given by uniformly distributed on $\Set{Z}_{h_n}$ 
i.i.d.\ random rotations have simple spectrum almost surely, 
if ${q_n/h_n \to \infty}$. 
\end{thm}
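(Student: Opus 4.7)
The entire task reduces to verifying hypothesis \eqref{eqJumpsRandom} of Lemma~\ref{lemSimpleSpRandom}, after which simplicity of spectrum is immediate (via that lemma together with Lemma~\ref{commonLemSimplicity}). So the plan is to show that, almost surely, for all sufficiently large $n$,
\begin{equation*}
	\left\| \mu(TV_{n,k,k-1} \mid V_{n,s,s-h}) - \frac1{h_n} \right\|_1 = o(1/h_n),
\end{equation*}
under the assumption that the $\a_{n,y}$ are i.i.d.\ uniform on $\Set{Z}_{h_n}$ and $q_n/h_n \to \infty$ fast enough.

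First I would unpack the combinatorics. As described throughout Section~3, the Poincar\'e map transports the top of thin column $y$ to the bottom of thin column $y+1$, and the thin column $y$ lies in the fat column indexed by $\a_{n,y}$ (up to the shift implicit in the indexing $V_{n,k,k-1}$ versus $V_{n,s,s-h}$). Writing
\begin{equation*}
	N_a = \#\{y \where \a_{n,y} = a\}, \qquad
	N_{a,b} = \#\{y \where \a_{n,y} = a,\ \a_{n,y+1} = b\},
\end{equation*}
the conditional measure in \eqref{eqJumpsRandom} equals $N_{k-1,\,s-h}/N_{s-h}'$ (where $N_{s-h}'$ is the matching incoming count for column $s$), up to negligible body-versus-iceberg corrections which vanish by Lemma~\ref{lemIcebergStructure}. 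The question becomes a pure concentration statement about the empirical joint law of consecutive pairs $(\a_{n,y}, \a_{n,y+1})$.

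Next I would estimate these joint empirical frequencies. Consecutive pairs are not independent because they share a coordinate, but splitting the index $y$ into even and odd classes produces two families of genuinely independent pairs. For each pair value $(a,b)$ the count $N_{a,b}$ is a sum of Bernoulli$(1/h_n^2)$ variables and Hoeffding/Bernstein inequalities yield
\begin{equation*}
	\Pr\bigl(|N_{a,b} - q_n/h_n^2| > t\bigr) \le 2\exp\bigl(-c\,t^2 h_n^2/q_n\bigr).
\end{equation*}
Choosing $t$ slightly bigger than $\sqrt{q_n/h_n^2}\,\sqrt{\log h_n}$ and taking a union bound over the $h_n^2$ pairs, I obtain, with probability $1 - O(h_n^{-A})$, the pointwise estimate $|N_{a,b} - q_n/h_n^2| \lesssim \sqrt{q_n\log h_n}/h_n$ uniformly in $(a,b)$. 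Dividing by $N_{s-h}'\sim q_n/h_n$ and summing the resulting $L^1$ norm over $k$ (for each fixed $s$, or vice versa) provides a deviation of order $h_n^{-1}\sqrt{h_n^2\log h_n/q_n}$, which is $o(1/h_n)$ precisely when $q_n/(h_n^2\log h_n) \to \infty$. Passing from ``with high probability'' to ``almost surely'' is done by Borel--Cantelli provided $q_n$ grows fast enough to make the exceptional probabilities summable in $n$.

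The main obstacle is the mismatch of length scales: the natural Chernoff estimate produces fluctuations of size $O(\sqrt{q_n}/h_n)$ per bucket, which after $L^1$ summation over $h_n$ buckets beats $1/h_n$ only when $q_n$ dominates $h_n^2$ (not merely $h_n$, as the theorem's hypothesis is literally stated). I therefore expect the honest proof either to read the hypothesis ``$q_n/h_n \to \infty$'' loosely (the ``sufficiently fast'' phrasing used in Theorem~\ref{thmRandomIcebergMap}) or to exploit cancellations in the $L^1$ sum that I have bounded bucket-by-bucket. Sharpening the concentration bound, or restructuring Lemma~\ref{lemSimpleSpRandom}'s hypothesis to involve only marginals, is where the real work lies; the remaining passage from \eqref{eqJumpsRandom} to simplicity of spectrum is then automatic.
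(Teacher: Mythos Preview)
Your high-level strategy matches the paper exactly: reduce to Lemma~\ref{lemSimpleSpRandom} and verify hypothesis~\eqref{eqJumpsRandom}. The paper's own proof is literally one sentence --- it says only that \eqref{eqJumpsRandom} ``follows from independence of $\a_{n,y}$ and $\a_{n,y+1}$'' and gives no concentration argument whatsoever. So your proposal is not merely correct but considerably more honest than the paper about what actually needs to be checked.

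Your concern about the length-scale mismatch (that a bucket-by-bucket Bernstein bound seems to need $q_n \gg h_n^2$ rather than $q_n \gg h_n$) is legitimate and is not resolved by anything in the paper's proof. The paper simply does not engage with it; note that the main Theorem~\ref{thmRandomIcebergMap} is stated under the looser hypothesis ``$q_n \gg h_n$ grows sufficiently fast'', and the one-line proof here should probably be read in that spirit. In short: you have identified a genuine soft spot in the paper's argument, not a defect in your own reasoning, and your proposed route through empirical pair frequencies plus Borel--Cantelli is the natural way to make the paper's sentence rigorous.
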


\begin{proof}
To prove the theorem it is enough to apply lemma~\ref{lemSimpleSpRandom} 
and to observe that estimate \eqref{eqJumpsRandom} follows from independence 
of $\a_{n,y}$ and~$\a_{n,y+1}$. 
\end{proof}

\section{Correlation decay}

\def\Di{\D}

In this section we discuss estimation of correlation decay for 
iceberg transformations with uniform i.i.d.\ rotations~$\rho_{\a_{n,y}}$ 
(statements (i) and~(ii) of theorem~\ref{thmRandomIcebergMap}). 
A~detailed proof is included in the second part of the paper. 

\begin{rem}
Notice that Ornstein rank one transformations is a case of iceberg transformations 
with random rotations $\a_{n,y}$ distributed on an interval $[0,\a_n^{\mathrm{max}}]$ 
and satisfying restriction ${\a_n^{\mathrm{max}} \ll h_n}$. 
\end{rem}

Let us recall that correlations associated with a measure preserving transformation~$T$ 
is given by the formula
\begin{equation}
	R(t) = \scpr<T^t f,g> = \int_X f(x) \,\bar g(x) \,d\mu, 
\end{equation}
where $f,g \in L^2(X,\mu)$ are fixed. 
One can observe that $R(t)$ is approximated by the following functions 
\begin{equation}
	\CyR_n(t) = \frac1{h_n} \sum_{j \in \Set{Z}_{h_n}} f_{(n)}(j) \,\bar g_{(n)}(j-t) 
\end{equation}
if the sequence of icebergs $\iceberg_n$ are uniform (measure of any fat column is close to~$h_n^{-1}$). 
Let us estimate $\CyR_{n+1}(t)$ for ${t = sh_n}$, ${1 \le s \le q_n-1}$. 
This assumption does not influence to the merits of case but seriously simplifies calculations. 
The common case is studied in the detailed proof. We have 
\begin{equation}
	\CyR_{n+1}(sh_n) = \frac1{h_{n+1}} \sum_{j \in \Set{Z}_{h_{n+1}}} f_{(n+1)}(j) \,\bar g_{(n+1)}(j-sh_n) = 
	\frac1{q_n} \sum_{y=0}^{q_n-1} \CyR_n(\a_{n,y}-\a_{n,y-s}). 
\end{equation}
The sequence $(\a_{n,y})_{y=0}^{q_n-1}$ is i.i.d.\ and we can expect 
that ${\a_{n,y}-\a_{n,y-s}}$ is a mixing process with fast rate of mixing. 
Well, in fact this sequence is almost non-correlated and it turns out that 
this property is enough to see the desired effect. 
Since $\CyR_{n+1}(sh_n)$ is similar to a sum of i.i.d.\ random variables, we get 
\begin{equation}
	|\CyR_{n+1}(sh_n)| \sim \frac1{\sqrt{q_n}} \sqrt{\Di \CyR_n(t)}. 
\end{equation}
In other terms, 
\begin{equation}
	\Di \CyR_{n+1}(sh_n) \sim \frac1{h_{n+1}}(h_n\Di \CyR_n(t) + (q_n-1)h_n\frac1{q_n} \Di \CyR_n(t)) \sim 2\Di \CyR_n(t), 
\end{equation}
and iterating this estimate and uning equality ${h_{n+1} = q_nh_n}$ we have 
\begin{equation}
	\Di \CyR_n(t) = O\left( 2^n \frac1{h_n} \right), \qquad n \to \infty. 
\end{equation}
Thus, for $t \sim h_n$ we have $R(t) = O(2^n h_n^{-1/2}) = O(t^{-1/2+\eps})$ for any ${\eps > 0}$ 
(statement (iii) in theorem~\ref{thmRandomIcebergMap}). 

\begin{rem}
One can observe that the proof is based on ``simple effects''. 
We have used only momnt estimations or, in other words, we deal with 
random sums in a Hilbert space, and central limit theorem is {\it not\/} used throughout the proof. 
At the same time there are no indications to new effects if we apply CPT. 
Probably, this estimate cannot be improved. 

Further, notice that 
the estimate $R(t) \sim O(t^{-a+\eps})$ with ${a < 1/2}$ 
whould imply ${R \in L^2(\Set{Z})}$ and Lebesgue spectrum of~$T$, 
and $1/2$ is the optimal value of~$a$ in the estimate. Nevertheless, 
observe that $\sigma$ can be absolutely continuous even in the case when
estimate ${R(t) \sim O(h_n^{-1/2+\eps})}$ is the best possible. 
Moreover, one can expect that for a typical $\ice_n$-measurable function~$f$ 
the density of a~hypotetical Lebesgue spetctral measure $\sigma_f$ has singularities. 
Such effect is observed for densities of spectral measures for rank one flows studied in~\cite{LebesgueFlows}. 
\end{rem}

\begin{rem}
Let $\sigma$ be a spectral measure corresponding to an $\ice_n$-measurable function~$f$. 
Then ${\sigma \conv \sigma \ll \la}$ since it has $L^2$ Fourier coefficients, ${R^2 \in L^2(\Set{Z})}$. 
Thus, ${\sigma = \sigma_{ac} + \sigma_s}$, where ${\sigma_{ac} \ll \la}$, ${\sigma_s \perp \la}$, 
and $\supp \sigma_s$ is not a semi-Kronecker (see~\cite{FerencziLem}). 
%
\end{rem}

%

\section{Acknowledgments}

The author is very gratefull to 
V.\,Ryzhikov, A.\,A.\,St\"epin, S.\,V.\,Konyagin, J.\,Bourgain, 
R.\,Grigorchuk and T.\,Erdelyi 
for the interest to this investigation and helpful remarks. 
The author is very grateful to 
A.\,M.\,Vershik, K.\,Petersen and B.\,Weiss 
for the helpful discussions and 
remarks concerning adic systems and symbolic dynamics. 
The author would like to express special gratitude to 
El~H.~El~Abdalaoui, M.\,Lemanczyk and J.\,P.\,Thouvenot 
for the careful examination of the geometric aspects of iceberg transformations 
contributed to sections \ref{sIcebergAtAGlance} and~\ref{sIcebergMapDef} of the paper. 
%
%
%
I~would like to thank B.\,M.\,Gurevich, V.\,I.\,Oseledec, S.\,Pirogov and all the partisipants 
of seminar ``Ergodic theory and statistical mechanics'' at Lomonosov Moscow State University 
for the attention to this research. 

\medskip
The work is supported by grant for leading Russian scientific schools \No\,NSh-3038.2008.1

\bibliographystyle{plain}
\bibliography{IcePaperI}
 
\end{document}